\documentclass[12pt,reqno]{amsart}

\usepackage[T1]{fontenc}
\usepackage{amsthm,mathtools,amssymb}
\usepackage{mathpazo}
\usepackage{enumerate}
\usepackage[colorlinks=true,linkcolor=blue,citecolor=green,urlcolor=blue]{hyperref}

\usepackage{tikz}

\advance\textwidth by 1.2in
\advance\oddsidemargin by -.6in
\advance\evensidemargin by -.6in

\newtheorem{theorem}{Theorem}[section]
\newtheorem{proposition}[theorem]{Proposition}
\newtheorem{lemma}[theorem]{Lemma}
\newtheorem{corollary}[theorem]{Corollary}
\theoremstyle{definition}
\newtheorem{definition}[theorem]{Definition}
\newtheorem{example}[theorem]{Example}
\theoremstyle{remark}
\newtheorem{remark}[theorem]{Remark}

\newcommand{\mf}{\mathfrak}
\newcommand{\mcal}{\mathcal}
\newcommand{\G}{\mathcal G}
\newcommand{\al}{\alpha}
\newcommand{\be}{\beta}
\newcommand{\ga}{\gamma}
\newcommand{\la}{\lambda}
\newcommand{\La}{\Lambda}
\newcommand{\Z}{\mathbb Z}
\newcommand{\Q}{\mathbb Q}
\newcommand{\C}{\mathbb C}
\newcommand{\eps}{\varepsilon}
\newcommand{\Mob}{\mu_{\mathrm{Mob}}}
\newcommand{\bzero}{\mathbf 0}
\newcommand{\mvec}{\mathbf m}
\newcommand{\bk}{\mathbf k}
\newcommand{\bx}{\mathbf x}
\newcommand{\by}{\mathbf y}

\DeclareMathOperator{\mult}{mult}
\DeclareMathOperator{\supp}{supp}
\DeclareMathOperator{\htop}{ht}
\DeclareMathOperator{\Sym}{Sym}

\newcommand{\im}{\rm{im}}
\newcommand{\re}{\rm{re}}
\newcommand{\dis}{\displaystyle}

\numberwithin{equation}{section}
\makeatletter
\def\section{\def\@secnumfont{\mdseries}\@startsection{section}{1}%
  \z@{.7\linespacing\@plus\linespacing}{.5\linespacing}%
  {\normalfont\scshape\centering}}
\def\subsection{\def\@secnumfont{\bfseries}\@startsection{subsection}{2}%
  {\parindent}{.5\linespacing\@plus.7\linespacing}{-.5em}%
  {\normalfont\bfseries}}
\makeatother
\title[Attached imaginary graphs]
{Chromatic characters and root multiplicities for Borcherds--Kac--Moody algebras}
\author{Priyanshu Chakraborty}
\address{Indian Institute of Science, Bangalore, India}
\email{priyanshuc@iisc.ac.in}
\thanks{PC thanks the IISc IPDF  postdoctoral research fellowship for carrying out this research.}
\author{Supriya Saha}
\address{Indian Institute of Science, Bangalore, India}
\email{supriyasaha@iisc.ac.in}
\thanks{SS thanks the NBHM PhD fellowship  for carrying out this research.}
\author{R. Venkatesh}
\address{Indian Institute of Science, Bangalore, India}
\email{rvenkat@iisc.ac.in}
\thanks{RV’s work is partially supported by CRG/2023/008780 of ANRF and DST FIST program-2021 [TPN700661].}
\date{}
\hypersetup{pdftitle={Chromatic characters and root multiplicities for Borcherds--Kac--Moody algebras},
  pdfauthor={Priyanshu Chakraborty, Supriya Saha, R. Venkatesh}}

\begin{document}

\begin{abstract}
The main aim of this article is to obtain explicit formulas for the root multiplicities of a class of finite-rank symmetrizable Borcherds--Kac--Moody algebras $\mf g(A)$ whose real part is of type $A$. The Dynkin diagram of each such algebra is obtained from the disjoint union of a Dynkin diagram of type $A$ and a finite simple graph $\mcal G_{\mathrm{im}}$, whose vertices correspond to imaginary simple roots, by choosing one vertex in each and joining the chosen vertices by a single edge of multiplicity $c\geq 1$.
More generally, we consider the same construction with the type $A$ diagram replaced by the Dynkin diagram of an arbitrary finite-rank symmetrizable Kac--Moody algebra $\mf g_{\mathrm{re}}$. The resulting symmetrizable Borcherds--Kac--Moody algebra $\mf g(A)$ has real part $\mf g_{\mathrm{re}}$ and imaginary part represented by $\mcal G_{\mathrm{im}}$.

Our starting point is a graph-theoretic description of the quotient of the denominator products of $\mf g(A)$ and $\mf g_{\mathrm{re}}$. This quotient is a multivariate independence polynomial of $\mcal G_{\mathrm{im}}$, with the variable corresponding to the attachment vertex weighted by normalized highest-weight characters of $\mf g_{\mathrm{re}}$. 
We prove that the coefficients of powers of this quotient factor into a generalized chromatic polynomial and a tensor-power weight multiplicity.
Using this, we obtain explicit formulas for all root multiplicities with nonzero imaginary component in terms of generalized chromatic polynomials of $\mcal G_{\mathrm{im}}$ and weight multiplicities in tensor products of highest-weight modules of $\mf g_{\mathrm{re}}$.  When $\mcal G_{\mathrm{im}}$ is chordal, these formulas specialize to finite divisor sums involving products of binomial coefficients; paths and complete graphs provide particular examples. In particular, we obtain explicit formulas for the root multiplicities of $\mf g(A)$ when its real part $\mf g_{\mathrm{re}}$ is of type $A$.


\end{abstract}
\maketitle
\pagestyle{plain}

\section{Introduction}\label{sec:introduction}

The denominator identity connects the root multiplicities of a
Kac--Moody algebra with the action of its Weyl group. For
Borcherds--Kac--Moody algebras, the corresponding identity also
contains correction terms indexed by sets of mutually orthogonal
imaginary simple roots. These terms provide a natural connection
with independent sets of graphs. In this paper, we use this
connection to compute root multiplicities for a family of algebras
obtained by attaching an arbitrary imaginary graph to a
symmetrizable Kac--Moody Dynkin diagram, by choosing one vertex in each and joining the chosen vertices by a single edge of multiplicity $c\geq 1$.

 Let $\mcal G_{\mathrm{re}}$ be the Dynkin diagram of a
finite-rank symmetrizable Kac--Moody algebra $\mf g_{\mathrm{re}}$, with simple
roots $\al_{i}$, $i\in {I^{\mathrm{re}}}$. Let $\G_{\mathrm{im}}$ be a finite simple graph on
$I^{\mathrm{im}}=\{1,2,\dots,n\}$, and associate an isotropic imaginary simple
root $\be_i$ to each vertex $i\in I^{\mathrm{im}}$. Choose $u\in I^{\mathrm{re}}$ and $v\in I^{\mathrm{im}}$,
and join $u$ to $v$ by a single edge with multiplicity $c\ge 1$. The real-row Cartan entry at
this edge is $-c$; thus
\[
a_{i,\be_{j}}=-c\,\delta_{i,u}\delta_{j,v}.
\]
The remaining entries are chosen to define a symmetrizable
Borcherds--Cartan matrix $A$, with imaginary adjacency graph $\G_{\mathrm{im}}$.
All simple roots have multiplicity one. We work with ordinary Lie
algebras and with a realization in which both the simple roots and the simple
coroots are linearly independent. 

Let $D_{\mf g}$ and $D_{\mf g_{\mathrm{re}}}$ be the normalized denominator products of
$\mf g(A)$ and $\mf g_{\mathrm{re}}$, respectively, and set
\[
S=\frac{D_{\mf g}}{D_{\mf g_{\mathrm{re}}}}
=\prod_{\xi\in\Psi}(1-e^{-\xi})^{\mult(\xi)},
\qquad
\Psi=\Delta^+(\mf g(A))\setminus\Delta^+_{\mathrm{re}}.
\]
where $\Delta^+(\mf g(A))$ is the set of positive roots of $\mf g(A)$ and $\Delta^+_{\mathrm{re}}$ is the set of positive roots $\mf g_{\mathrm{re}}$.
Choose a dominant integral weight $\la$ satisfying
$\la(h_{\alpha_i})=c\delta_{i,u}$, $i\in I^{\mathrm{re}}$, and write
\[
H_\la=e^{-\la}\operatorname{Ch} V_{\mf g_{\mathrm{re}}}(\la),\qquad
M_\la(\eta;t)=H_\la^t[e^{-\eta}]
=\dim\left(V_{\mf g_{\mathrm{re}}}(\la)^{\otimes t}\right)_{t\la-\eta}.
\]
The normalized character is independent of the choice of the
remaining Cartan values of $\la$.
Using the substitution $x_{j}=e^{-\be_{j}}$, the following identity
\[
S=I(\G,\mathbf z),\qquad
z_v=-x_vH_\la,\qquad z_{j}=-x_{j}\quad(j\neq v),
\]
is proved in Proposition~\ref{cor:S}. Here $I(\G,\bx)$ denotes the
multivariate independence polynomial. For $\mvec=(m_{j}:j\in I^{\mathrm{im}})\in\Z_{\geq0}^{I^{\mathrm{im}}}$, let
$\pi_{\G_{\mathrm{im}}}^{\mvec}(q)$ be the generalized chromatic polynomial:
it counts assignments of $m_{j}$ distinct colors to vertex $j$ from
a palette of $q$ colors, with disjoint color sets at adjacent
vertices. The identity
\[
I(\G,\bx)^q=\sum_{\mvec\geq0}\pi_{\G}^{\mvec}(q)\bx^{\mvec}
\]
holds coefficientwise with $q$ indeterminate. Combining it with
the expression for $S$ gives our main coefficient formula
(Corollary~\ref{cor:coefficient}): for
$\eta=\sum_{i\in {I^{\mathrm{re}}}}k_i\al_i\in Q_{\mathrm{re}}^+$ and
$\mu=\eta+\sum_{j\in I^{\im}}m_{j}\be_{j}$,
\[
{
S^q[e^{-\mu}]
=(-1)^{|\mvec|}M_\la(\eta;m_v)\pi_{\G_{\mathrm{im}}}^{\mvec}(q).
}
\]
This formula gives an explicit
graph-polynomial expression of Kang's character-based
denominator formula, see \cite{Kang}.

Root multiplicities follow by comparing linear coefficients in
$q$. For $\mvec\neq\bzero$, put
\[
\kappa_{\G_{\mathrm{im}}}(\mvec)=(-1)^{|\mvec|-1}\pi_{\G_{\mathrm{im}}}^{\mvec}(q)[q],
\qquad
g=\gcd\bigl(\{k_i:i\in {I^{\mathrm{re}}}\}\cup\{m_{j}:j\in I^{\mathrm{im}}\}\bigr).
\]
Then considering $-S^q[e^{-\mu}][q]$ and using M\"obius inversion we get
(Theorem~\ref{cor:root-general})
\[
{
\mult(\mu)
=\sum_{d\mid g}\frac{\Mob(d)}d
M_\la\left(\frac\eta d;\frac{m_v}d\right)
\kappa_{\G_{\mathrm{im}}}\left(\frac\mvec d\right).
}
\]
Here $\Mob$ is the arithmetic M\"obius function, and root
multiplicity is extended by zero to nonroots.
When $\G_{\mathrm{im}}(\supp(\mvec))$ is disconnected, its generalized chromatic
polynomial has no linear term, and the corresponding root
multiplicity vanishes.\\

For chordal graph $\G$, $\pi_{\G}^{\mvec}(q)$ admit particularly simple
expressions using that we can get an closed formula for the $\mult(\mu).$ In particular, for a perfect elimination ordering in which earlier
neighbors form a clique, set
\[
a_r(\mvec)=\sum_{\substack{s\leq r\\s\in N_{\G}[r]}}m_s,
\]
where $N_{\G}[r]$ is the closed neighborhood of $r$. The
multicoloring product of \cite{BHV} is
\[
\pi_{\G}^{\mvec}(q)
=\prod_{r=1}^n\binom{q-a_r(\mvec)+m_r}{m_r}.
\]
If $J=\supp(\mvec)$ is nonempty and connected and $r_0=\min J$,
we obtain
\[
\kappa_{\G_{\mathrm{im}}}(\mvec)
=\frac1{m_{r_0}}
\prod_{r\in J\setminus\{r_0\}}\binom{a_r(\mvec)-1}{m_r}.
\]
Consequently, every coefficient of $S^q$ is a product of binomial
coefficients times one tensor-power weight multiplicity. Every root
multiplicity with nonzero imaginary simple-root component is then
given by a finite divisor sum of such products. For the path $L_n$, one has $a_r(\mvec)=m_{r-1}+m_r$, with $m_0=0$,
and hence these expressions give $\mult(\mu)$ for the case when $\mcal G_{\mathrm{im}}=L_n$.

We also give an explicit formula for $M_\la(\eta,;t)$. For a general
finite-rank symmetrizable  algebra $\mf g_{\mathrm{re}}$, using some Weyl group arguments we proved an combinatorial formula for $M_\la(\eta;t)$ in Proposition \ref{prop:finite-Weyl}. For
$\mcal G_{re}=A_\ell$ and $c=1$, the exterior-power realization of
$V_{\mf g_{\mathrm{re}}}(\omega_u) \cong \bigwedge^u(\C^{l+1})$ gives a finite multinomial expression for $H_{\omega_u
}(\mathbf y)$, Proposition \ref{prop:A-character}. At the endpoint $u=\ell$, it gives
\[
M_{\omega_\ell}(\eta;t)
=\frac{t!}{k_1!\,\prod_{i=2}^\ell(k_i-k_{i-1})!\,(t-k_\ell)!}
\]
when $0\leq k_1\leq\cdots\leq k_\ell\leq t$, and is zero
otherwise. We treat arbitrary $c$ at this endpoint through the
symmetric-power realization of $V_{\mf g_{\mathrm{re}}}(c\omega_\ell)$. These
computations apply to every attached graph $\G_{\mathrm{im}}$ and give explicit
factorial formulas in the chordal and path cases.

The paper is organized as follows:
Section~\ref{sec:setup} fixes the Cartan data and normalized
character, and Section~\ref{sec:chromatic} recalls the multicoloring
identities. Sections~\ref{sec:quotient} and~\ref{sec:multiplicities}
specialize the denominator and multiplicity identities to a single
attachment. Sections~\ref{sec:chordal} treat
chordal graphs and paths. Section~\ref{sec:real-factor} gives finite
expressions for $M_\la(\eta,t)$. Section~\ref{sec:type-A}
develops the type $A$ formulas and numerical examples.

\section{Attached graph and normalized character}\label{sec:setup}

\subsection{Cartan Data and Borcherds--Kac--Moody algebra}
All vector spaces are assumed to be defined over complex numbers. A Borcherds--Cartan matrix  $A=(a_{ij})_{i,j\in I}$
 indexed by a finite set
$I$ is a real matrix  satisfying the following conditions for all $i,j\in I$:
\begin{enumerate}
    \item  there exists a diagonal matrix
$D=\operatorname{diag}(d_1,\ldots,d_{|I|})$ with positive entries such that $DA$
is symmetric (i.e., $A$ is symmetrizable);
    \item $a_{ii}=2$ or $a_{ii}\leq 0$;
    \item $a_{ij}\leq 0$ if $i\neq j$, and $a_{ij}\in\mathbb{Z}$
          if $a_{ii}=2$;
    \item $a_{ij}=0$ if and only if $a_{ji}=0$.
\end{enumerate}
Denote by $I^{\mathrm{re}}=\{i\in I : a_{ii}=2\}$ and
$I^{\mathrm{im}}=I\setminus I^{\mathrm{re}}$.
The \emph{Borcherds--Kac--Moody algebra} $\mathfrak{g}=\mathfrak{g}(A)$
associated to the Borcherds--Cartan matrix $A$ is the Lie algebra
generated by $e_i,f_i,h_{\alpha_i}$, $i\in I$, with the following defining relations:
\begin{align*}
\text{(R1)}\quad
& [h_{\alpha_i},h_{I^{\mathrm{re}}}]=0
&& \text{for } i,j\in I;\\
\text{(R2)}\quad
& [h_{\alpha_i},e_k]=a_{ik}e_k,\qquad [h_{\alpha_i},f_k]=-a_{ik}f_k
&& \text{for } i,k\in I;\\
\text{(R3)}\quad
& [e_i,f_{j}]=\delta_{ij}h_{\alpha_i}
&& \text{for } i,j\in I;\\
\text{(R4)}\quad
& (\operatorname{ad}e_i)^{1-a_{ij}}e_{j}=0,\qquad
  (\operatorname{ad}f_i)^{1-a_{ij}}f_{j}=0
&& \text{if } i\in I^{\mathrm{re}} \text{ and } i\ne j;\\
\text{(R5)}\quad
& [e_i,e_{j}]=0,\qquad [f_i,f_{j}]=0
&& \text{if } a_{ij}=0.
\end{align*}

Denote the set of simple roots, simple coroots corresponding to real nods by $\alpha_i$ and $h_{\alpha_i}$ $(i\in I^{\mathrm{re}})$. We also denote 
set of simple roots, simple coroots corresponding to real nods by $\alpha_i$ and $h_{\alpha_i}$ $(i\in I^{\mathrm{re}})$. We use $\beta_i$ and $h_{\beta_i}$ $(i\in I^{\mathrm{im}})$ to denote the
set of simple roots, simple coroots corresponding to imaginary nodes. We also use $h_i$ for $i\in I$ to denote the simple coroots if we do not need to specify the real or imaginary nodes specifically. 
The Weyl group of $\mf g$ is denoted by $W_I$. The real part of $\mf g$, denoted as $\mf g_\mathrm{re}$, is the subalgebra of $\mf g$ generated by $e_i, f_i\in I^\mathrm{re}$.
Set
\[
Q^+=\bigoplus_{i\in {I^{\mathrm{re}}}}\Z_{\geq0}\al_i\oplus \bigoplus_{i\in {I^{\mathrm{im}}}}\Z_{\geq0}\beta_i,
\qquad \Delta^+ =\Delta^+(\mf g), \qquad \Delta_\mathrm{re}^+=\Delta^+(\mf g_\mathrm{re})
\]
where $\Delta^+(\mf a)$ denotes the set of positive roots of $\mf a$. We also use $Q^+_{\mathrm{re}}=\bigoplus_{i\in {I^{\mathrm{re}}}}\Z_{\geq0}\al_i$.
The quasi-Dynkin diagram $\mathcal{G}$ of $\mf g$ is a simple graph with vertex $I$ and we draw an edge between two distinct vertices $i, j\in I$ if $a_{ij}\neq 0.$ The edge set of $\mathcal{G}$ is denoted by $E.$
For any subset $S\subseteq I$, denote $A_S = (a_{ij})_{i, j\in S}$. We denote $\mathcal{G}_{\mathrm{re}}$ (respectively $\mathcal{G}_{\mathrm{im}}$) by the subgraph spanned by $I^{\mathrm{re}}$ (respectively $I^{\mathrm{im}}$).
\subsection{The special case} \textit{ Throughout this paper, we consider only the following special case.} The quasi-Dynkin diagram $\mathcal{G}$ of $\mf g$ is obtained from a quasi-Dynkin diagram $\mathcal{G}_{\mathrm{re}}$ of a symmetrizable Kac-Moody algebra by attaching arbitrary finite simple graph $\mathcal{G}_{\mathrm{im}}$ by choosing one vertex from $\mathcal{G}_{\mathrm{re}}$ and one vertex from  $\mathcal{G}_{\mathrm{im}}$ and joining them by an edge (with multiplicity $c\ge 1$). We level the vertex set of $\G_{\im}$ by $\{1,2,\dots,n\}.$

\begin{center}

\begin{tikzpicture}[thick]
  \draw[blue!60!black, fill=blue!8]
    plot[smooth cycle, tension=0.8] coordinates {
      (-0.85,0.10) (-0.65,0.75) (-0.10,0.65)
      (0.50,0.85) (0.85,0.30) (0.65,-0.25)
      (0.35,-0.70) (-0.30,-0.60) (-0.75,-0.45)
    };

  \draw[teal!60!black, fill=teal!8]
    plot[smooth cycle, tension=0.8] coordinates {
      (2.15,0.20) (2.45,0.80) (3.00,0.65)
      (3.55,0.75) (3.85,0.20) (3.60,-0.25)
      (3.35,-0.70) (2.80,-0.55) (2.30,-0.65)
    };

\draw (0,0) -- node[midway, above] {$c$} (3,0);
\fill (0,0) circle (2pt);
  \fill (3,0) circle (2pt);

  \node[below=3pt] at (0,0) {$u$};
  \node[below=3pt] at (3,0) {$v$};

  \node[anchor=north] at (0,-0.85) {$\mathcal{G}_{\mathrm{re}}$};
  \node[anchor=north] at (3,-0.85) {$\mathcal{G}_{\mathrm{im}}$};
\end{tikzpicture}

\end{center}
This special can be described as follows: We fix two vertices $u, v\in I$ such that $u\in I^{\mathrm{re}}$  and $v\in  I^{\mathrm{im}}$, and they are adjacent in $\mathcal{G}$. We assume that the Borcherds-Cartan matrix of $\mathfrak g$ is of the form \begin{align}\label{eqn: matix general }
A=
\begin{pmatrix}
	A_{I^{\mathrm{re}}} & B\\
	B^{T} & C
\end{pmatrix},
\end{align} 
where $B=(b_{ij})_{i \in I^{\re}, j \in I^{\im}}$ and $b_{ij}=-c \delta_{i,u}\delta_{j,v}$ and 
$C=(c_{ij})_{i,j \in I^{\im}},$ where $c_{ij}=-1$ only when there is an edge between $i,j$ in $\mathcal{G}$ and zero otherwise.
It is enough to have $A_{I^{\mathrm{re}}}$ symmetrizable, then there exists  $D'=\operatorname{diag}(d_i)_{i\in I^{\mathrm{re}}}$ which makes $D'A_{I^{\mathrm{re}}}$ a symmetric matrix. Then with the choice of the diagonal matrix  \[
D=
\operatorname{diag}
\left(
(d_i)_{i\in I^{\mathrm{re}}},
\underbrace{d_u,\ldots,d_u}_{|I\backslash I^{\mathrm{re}}| \text{ times} }
\right), 
\]
$DA$ is a symmetric matrix.
We fix a realization in which both the simple roots and the simple
coroots are linearly independent. Fix a weight
$\rho$ satisfying:
\[
\rho(h_{\alpha_i})=1\quad(i\in I^{\re}),\qquad
\rho(h_{\be_j})=0\quad(j\in I^{\im}).
\]
In the realization of $\mf g_{\re}$, choose $\rho'$ with
$\rho'(h_{\alpha_i})=1$ for every $i\in {I^{\mathrm{re}}}$. Put
\[
Q^+=Q_{\mathrm{re}}^+\oplus\bigoplus_{j\in {I^{\mathrm{im}}}}\Z_{\geq0}\be_j,
\qquad
\Psi=\Delta^+(\mf g(A))\setminus\Delta^+({I^{\mathrm{re}}}).
\]
For a nonzero $\xi\in Q^+$, let $\mult(\xi)=\dim\mf g(A)_\xi$.
\subsection{Character of highest weight $\mf g_{\mathrm{re}}$ modules}\label{subsec: ch of g_X}
Let $\Lambda_u$ denote the fundamental highest weight corresponding to the node $u$. Then we have 
\[
\Lambda_u(h_{\alpha_i})=\delta_{i,u}\qquad(i\in {I^{\mathrm{re}}}).
\]
Set $\la=c\La_u$, a dominant integral weight of $\mf g_{\mathrm{re}}$. Let $V_{\mf g_{\mathrm{re}}}(\la)$ be the irreducible
integrable highest-weight module for $\mf g_{\mathrm{re}}$ with highest weight $\la$ and define
\begin{equation}\label{eq:H}
H_\la=e^{-\la}\operatorname{Ch} V_{\mf g_{\mathrm{re}}}(\la)
=\sum_{\ga\in Q_{\mathrm{re}}^+}h_\la(\ga)e^{-\ga},
\qquad h_\la(\ga)=\dim V_{\mf g_{\mathrm{re}}}(\la)_{\la-\ga}.
\end{equation}
Clearly, $h_\la(0)=1$.
It is well known that, in finite type $H_\la$ is a polynomial; otherwise it is a formal series.\\

We use the standard independence of the normalized highest-weight
character from the choice of the remaining Cartan values; see
\cite[Chapters~9--10]{Kac}. Explicitly, if $\la'(h_{\alpha_i})=\la(h_{\alpha_i})$, for all $i \in {I^{\mathrm{re}}}$,
then $\zeta=\la'-\la$ vanishes on the simple coroots of $\mathfrak g_{\mathrm{re}}$. Hence
\[
V_{\mf g_{\mathrm{re}}}(\la')\cong V_{\mf g_{\mathrm{re}}}(\la)\otimes\C_\zeta, \text{ and }
\qquad  H_{\la'}=e^{-\la'}\operatorname{Ch} V_{\mf g_{\mathrm{re}}}(\la')=H_\la,
\]
where $\C_\zeta$ is the one-dimensional module with Cartan weight
$\zeta$ and trivial action of all simple root generators.

\subsection{Completions and coefficients}
Write $y_i=e^{-\alpha_i}$ for $i\in I^{\mathrm{re}}$ and $x_{j}=e^{-\be_j}$ for $j\in I^{\mathrm{im}}$. For
$\mvec=(m_i:i\in I^{\im})\in\Z_{\geq0}^{I^{\im}}$, put
\[
|\mvec|=\sum_{i\in I^{\im}}m_i, \qquad
\mvec!=\prod_{i\in I^{\im}}m_i!.
\]
If $\eta(\mathbf{k})=\sum_{i\in {I^{\mathrm{re}}}}k_i\al_i$ and $\nu(\mvec)=\sum_{j\in I^{\mathrm{im}}}m_j\be_j$, then we have 
 $e^{-(\eta(\mathbf{k})+\nu(\mvec))}=\by^{\bk} \bx^{\mvec}$. For notational convenience some times we write $\eta(\mathbf k)$ just by $\eta.$
All series are interpreted in
\[
\widehat R=\Q[[y_i, x_j:i\in {I^{\mathrm{re}}},\ j\in {I^{\mathrm{im}}}]].
\]
For a series $F$, denote
$F[\bx^{\mvec}]$, $F[e^{-\mu}]$, and $F[e^{-\mu}][q]$ by the coefficients of $\bx^{\mvec}$ in $F$ and so on. 
If $U \in \widehat R$ has zero constant term, then define
\begin{equation}\label{eq:formal-power}
(1+U)^q=\sum_{k\geq0}\binom qk U^k,
\qquad
\binom qk=\frac{q(q-1)\cdots(q-k+1)}{k!},\qquad \binom q0=1.
\end{equation}
For each fixed monomial, only finitely many terms contribute, so its
coefficient is a polynomial in the indeterminate $q$.
For indeterminate $q$, these series belong to
$\Q[q][[y_i, x_j:i\in {I^{\mathrm{re}}},\ j\in {I^{\mathrm{im}}}]]$.

\section{Generalized chromatic polynomials}\label{sec:chromatic}

\subsection{Graph notation and multicoloring} For this section, assume that $\mathcal{G}$ is an arbitrary finite simple graph with vertex set $I$.
We use $\G_S$ for the subgraph induced by $S\subseteq I$, and
$\mcal I(\G)$ for the set of independent subsets of $\G$, including
$\varnothing$. Put
\[
x_S=\prod_{i\in S}x_i,\qquad
I(\G,\bx)=\sum_{S\in\mcal I(\G)}x_S,\qquad
\supp(\mvec)=\{i\in I:m_i>0\}.
\]
The generalized chromatic polynomial defind as follows (see \cite{BHV}).
\begin{definition}\label{def:multicoloring}
For a nonnegative integer $q$, a proper vertex multi-coloring associated
to $\mvec$ is a map
$\tau_{\G}^{\mvec}:I\longrightarrow\mathcal P(\{1,\ldots,q\})$
such that
\[
|\tau_{\G}^{\mvec}(i)|=m_i,
\qquad
\tau_{\G}^{\mvec}(i)\cap\tau_{\G}^{\mvec}(j)=\varnothing
\quad\text{if }e(i,j)\in E.
\]
Let $P_k(\mvec,\G)$ be the set of ordered $k$-tuples
$(P_1,\ldots,P_k)$ of nonempty independent subsets such that every
vertex $i$ belongs to exactly $m_i$ of the sets $P_j$'s.
The set $P_0(\bzero,\G)$ consists of the empty tuple, and
$P_0(\mvec,\G)=\varnothing$ when $\mvec\neq\bzero$.
\end{definition}

We recall the multicoloring expansion
\cite[Proposition~1]{BHV} and the independence-power identity
\cite[Theorem~7]{BHV}, see 
\cite[Proposition~3.7]{AKV} as well.
\begin{theorem}[Multicoloring identities]\label{thm:multicoloring-identities}
The number of proper vertex multi-colorings is the value at $q$ of
the polynomial
\begin{equation}\label{eq:chromatic-expansion}
\pi_{\G}^{\mvec}(q)
=\sum_{k=0}^{|\mvec|}|P_k(\mvec,\G)|\binom qk.
\end{equation}
Moreover, 
\begin{equation}\label{eq:independence-power}
I(\G,\bx)^q=\sum_{\mvec\geq0}\pi_{\G}^{\mvec}(q)\bx^{\mvec}.
\end{equation}
In particular, $\pi_{\G}^{\bzero}(q)=1$.
\end{theorem}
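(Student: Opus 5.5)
The plan has two parts: first the expansion \eqref{eq:chromatic-expansion}, then the identity \eqref{eq:independence-power}, with the second derived from the first.

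\textbf{The expansion.} Fix a nonnegative integer $q$ and a proper multicoloring $\tau$ with $|\tau(i)|=m_i$. For each color $a\in\{1,\dots,q\}$, the color class $C_a=\{i\in I: a\in\tau(i)\}$ is independent, because adjacent vertices have disjoint color sets. Each vertex $i$ lies in exactly $m_i$ of the classes $C_a$. Discard the empty classes and record which colors $a_1<\dots<a_k$ were actually used. This gives a bijection
\[
\tau\;\longmapsto\;\bigl(\{a_1<\dots<a_k\},\,(C_{a_1},\dots,C_{a_k})\bigr)
\]
from proper multicolorings onto pairs consisting of a $k$-subset of $\{1,\dots,q\}$ and an element of $P_k(\mvec,\G)$. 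Conversely, a $k$-subset together with an ordered tuple $(P_1,\dots,P_k)$ determines $\tau(i)=\{a_j : i\in P_j\}$, and $|\tau(i)|=m_i$ by the defining condition on $P_k(\mvec,\G)$. Since every independent set has size at least one, $k\le|\mvec|$. Counting both sides gives $\sum_k|P_k(\mvec,\G)|\binom qk$, and the right-hand side is a polynomial in $q$. For $\mvec=\bzero$ the only contribution is the empty tuple, so $\pi^{\bzero}_\G=1$.

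\textbf{The independence-power identity.} Write $I(\G,\bx)=1+U$, where $U=\sum_{\varnothing\ne S\in\mcal I(\G)}x_S$. By \eqref{eq:formal-power},
\[
(1+U)^q=\sum_{k\ge0}\binom qk U^k .
\]
Expanding $U^k$ gives a sum over ordered $k$-tuples $(P_1,\dots,P_k)$ of nonempty independent sets, with monomial $x_{P_1}\cdots x_{P_k}$. The exponent of $x_i$ in this monomial is the number of $j$ with $i\in P_j$. Hence
\[
U^k[\bx^{\mvec}]=|P_k(\mvec,\G)|.
\]
Summing over $k$ and comparing with \eqref{eq:chromatic-expansion} yields \eqref{eq:independence-power}. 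Because every $P_j$ is nonempty, $U^k$ contributes to $\bx^{\mvec}$ only when $k\le|\mvec|$. So only finitely many terms contribute to each monomial, and the identity holds coefficientwise with $q$ indeterminate, not merely for integer $q$.

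\textbf{Expected difficulty.} The only delicate point is checking that the bijection in the first part is well defined, in particular the bookkeeping when discarding empty color classes and ordering the used colors. The rest is formal.
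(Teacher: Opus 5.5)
Your proof is correct and complete. The color-class map is a bijection between proper multicolorings and pairs consisting of a $k$-subset of $\{1,\dots,q\}$ and an element of $P_k(\mvec,\G)$; the one check you leave implicit, that the reconstructed $\tau$ is proper, follows at once from the independence of the $P_j$. Expanding $(1+U)^q$ through \eqref{eq:formal-power} then gives \eqref{eq:independence-power} coefficientwise with $q$ indeterminate.

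The paper does not prove this theorem itself: it imports it from \cite[Proposition~1 and Theorem~7]{BHV} (see also \cite[Proposition~3.7]{AKV}). Your argument is, to my knowledge, the standard one behind those cited results.
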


Grouping the tuples in \eqref{eq:chromatic-expansion} by their
independent-set multiplicities gives the following equivalent
multinomial form of that established identity.

\begin{corollary}[Multinomial form]\label{cor:independent-multinomial}
Let $\mcal I^*(\G)=\mcal I(\G)\setminus\{\varnothing\}$, and write
$(q)_k=q(q-1)\cdots(q-k+1)$, with $(q)_0=1$. Then
\begin{equation}\label{eq:independent-multinomial}
\pi_{\G}^{\mvec}(q)
=\sum_{\substack{n_P\geq0\ (P\in\mcal I^*(\G))\\
\sum\limits_{P: i\in P}n_P=m_i\ (i\in I)}}
\frac{(q)_{\sum_Pn_P}}{\prod_P n_P!}.
\end{equation}
This is a finite expression for an arbitrary graph $\G$.
\end{corollary}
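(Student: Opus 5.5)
The plan is to start from the expansion \eqref{eq:chromatic-expansion} of Theorem~\ref{thm:multicoloring-identities},
\[
\pi_{\G}^{\mvec}(q)=\sum_{k=0}^{|\mvec|}|P_k(\mvec,\G)|\binom qk ,
\]
and to count $|P_k(\mvec,\G)|$ by grouping ordered tuples according to how often each nonempty independent set appears. Given $(P_1,\ldots,P_k)\in P_k(\mvec,\G)$, set $n_P=\#\{j: P_j=P\}$ for $P\in\mcal I^*(\G)$. Then $\sum_P n_P=k$. The covering condition says each vertex $i$ lies in exactly $m_i$ of the $P_j$, which is the same as $\sum_{P\ni i}n_P=m_i$.

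Conversely, fix a family $(n_P)$ satisfying these linear constraints, and let $k=\sum_P n_P$. The ordered $k$-tuples whose multiplicity vector is $(n_P)$ are the arrangements of a multiset. There are exactly $k!/\prod_P n_P!$ of them. Every such tuple lies in $P_k(\mvec,\G)$, since its entries are nonempty independent sets and the covering condition is exactly the constraint. Summing over all such families gives
\[
|P_k(\mvec,\G)|=\sum_{\substack{(n_P):\ \sum_P n_P=k\\ \sum_{P\ni i}n_P=m_i}}\frac{k!}{\prod_P n_P!}.
\]

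Next I substitute into the expansion and use $k!\binom qk=(q)_k$. The sum over $k$ then merges with the sum over families, because $k$ is determined by $(n_P)$. This yields \eqref{eq:independent-multinomial}. The edge cases are handled directly: for $\mvec=\bzero$ only the zero family survives and contributes $(q)_0=1$, consistent with $P_0(\bzero,\G)$ being the empty tuple.

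It remains to check that the sum is finite and that no terms are lost when $k$ is capped at $|\mvec|$. Each $P$ is nonempty, so summing the constraints over $i$ gives
\[
\sum_P n_P|P|=|\mvec|.
\]
Hence each $n_P\le|\mvec|$ and $k\le|\mvec|$. Since $\mcal I^*(\G)$ is finite, there are only finitely many families $(n_P)$.

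The only delicate point is the bijection between ordered tuples and multiset arrangements. Entries may repeat, so the count is a multinomial coefficient rather than $k!$. No other obstacle is expected.
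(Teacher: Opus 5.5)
Your proposal is correct and follows the same route as the paper, which obtains this corollary by grouping the tuples in \eqref{eq:chromatic-expansion} by their independent-set multiplicities, together with the identity $k!\binom qk=(q)_k$. Your bound $\sum_P n_P|P|=|\mvec|$ makes explicit the finiteness and the range $k\le|\mvec|$, which the paper leaves implicit.
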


\subsection{The clique blow-up}
Let $\G[\mvec]$ be the clique blow-up of $\G$, obtained by replacing
vertex $i$ by a clique $K_{m_i}$ of size $m_i$ and omitting it when $m_i=0$.
Its vertices are $i^1,\ldots,i^{m_i}$ for each $i\in\supp(\mvec)$. All copies of a
fixed vertex form a clique. Every copy of $i$ is adjacent to every
copy of $j$ when $e(i,j)\in E$, and there are no other edges.
Write $\pi_{\G[\mvec]}(q)$ for its ordinary chromatic polynomial. The following Proposition follows from \cite[Proposition~2]{BHV} and \cite[Theorem~1.1(a)]{Stanley}, also see references therein.
\begin{proposition}[Whitney expansion]\label{prop:blow-up}
 For every $\mvec\geq 0$
chromatic polynomials satisfy
\begin{equation}\label{eq:blow-up}
\pi_{\G}^{\mvec}(q)=\frac{\pi_{\G[\mvec]}(q)}{\mvec!}.
\end{equation}
Whitney's expansion gives the
following formula.
If $H=\G[\mvec]$ and $c_H(F)$ denotes the number of connected components
of the spanning graph $(V(H),F)$, including isolated vertices, then
\begin{equation}\label{eq:edge-subsets}
\pi_{\G}^{\mvec}(q)
=\frac1{\mvec!}\sum_{F\subseteq E(H)}(-1)^{|F|}q^{c_H(F)},
\end{equation}
where for every $\mvec$, $c_H(\varnothing)=|V(H)|=|\mvec|$ and
when $\mvec=\bzero$, $H$ has no vertices, so the only term in
\eqref{eq:edge-subsets} is $q^0=1$.
\end{proposition}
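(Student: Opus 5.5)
The plan is to prove \eqref{eq:blow-up} directly by comparing colorings, and then derive \eqref{eq:edge-subsets} from Whitney's classical expansion of the ordinary chromatic polynomial of $H=\G[\mvec]$.

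\emph{Step 1: reduce to nonnegative integers.} Both sides of \eqref{eq:blow-up} are polynomials in $q$: the left by Theorem~\ref{thm:multicoloring-identities}, the right because it is a chromatic polynomial. It is therefore enough to prove equality at every nonnegative integer $q$.

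\emph{Step 2: the counting bijection.} Fix $q\in\Z_{\geq0}$. Consider a proper coloring $\phi$ of $H$ with $q$ colors.
\begin{itemize}
\item The copies $i^1,\dots,i^{m_i}$ form a clique, so they receive $m_i$ distinct colors. Set $\tau(i)=\{\phi(i^1),\dots,\phi(i^{m_i})\}$, a set of size $m_i$.
\item If $e(i,j)\in E$, every copy of $i$ is adjacent to every copy of $j$, so $\tau(i)\cap\tau(j)=\varnothing$. Thus $\tau$ is a proper multicoloring in the sense of Definition~\ref{def:multicoloring}.
\item Conversely, a proper multicoloring $\tau$ lifts to exactly $\prod_i m_i!=\mvec!$ proper colorings of $H$. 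These correspond to the bijections $\{i^1,\dots,i^{m_i}\}\to\tau(i)$ chosen independently for each $i$. Every such lift is proper: within a clique the colors are distinct, across an edge the color sets are disjoint, and there are no other edges.
\end{itemize}
The map $\phi\mapsto\tau$ is therefore exactly $\mvec!$-to-one onto the proper multicolorings, which gives $\pi_{\G[\mvec]}(q)=\mvec!\,\pi^{\mvec}_{\G}(q)$.

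\emph{Step 3: the degenerate case.} When $\mvec=\bzero$, $H$ is the empty graph. Its chromatic polynomial is $1$, and $\mvec!=1$, which is consistent with $\pi^{\bzero}_\G=1$.

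\emph{Step 4: Whitney's expansion.} For any finite graph $H$, apply inclusion--exclusion over the edges violated by an arbitrary map $V(H)\to\{1,\dots,q\}$. For $F\subseteq E(H)$, the number of maps that are constant on the endpoints of every edge in $F$ is $q^{c_H(F)}$. Hence
\[
\pi_H(q)=\sum_{F\subseteq E(H)}(-1)^{|F|}q^{c_H(F)}.
\]
Dividing by $\mvec!$ and using Step~2 yields \eqref{eq:edge-subsets}. The stated conventions hold directly: $c_H(\varnothing)=|V(H)|=|\mvec|$, and the empty graph contributes the single term $q^0$.

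\emph{Main obstacle.} The only delicate point is the exact $\mvec!$-to-one count in Step~2. It uses the facts that copies of a single vertex are mutually adjacent and that the blow-up adds no edges beyond those described. I would state both facts explicitly from the definition of $\G[\mvec]$ before counting.
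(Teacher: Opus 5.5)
Your proof is correct and has the same two-step structure the paper relies on: first the blow-up identity $\pi_{\G}^{\mvec}(q)=\pi_{\G[\mvec]}(q)/\mvec!$, then Whitney's edge-subset expansion of the ordinary chromatic polynomial of $H=\G[\mvec]$. The paper gives no argument of its own and simply cites \cite[Proposition~2]{BHV} and \cite{Stanley} for these two facts; your $\mvec!$-to-one counting argument and your inclusion--exclusion over edge subsets are self-contained proofs of exactly those cited ingredients.
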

{

We also use multiplicativity over support components, an immediate
consequence of the multi-coloring definition in
\cite[Section~2.3]{BHV}.

\begin{proposition}\label{prop:components}
If the nonempty support $\supp(\mvec)$ has components
$S_1,\ldots,S_s$ in $\G(\supp(\mvec))$, then
\begin{equation}\label{eq:component-factorization}
\pi_{\G}^{\mvec}(q)
=\prod_{a=1}^s\pi_{\G(S_a)}^{\mvec|_{S_a}}(q).
\end{equation}
In particular, $q^s$ divides $\pi_{\G}^{\mvec}(q)$.
\end{proposition}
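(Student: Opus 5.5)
The factorization \eqref{eq:component-factorization} will be proved directly from the definition of proper multicolorings (Definition~\ref{def:multicoloring}), for nonnegative integers $q$, and then extended to indeterminate $q$. Let $S=\supp(\mvec)$ with components $S_1,\ldots,S_s$ of $\G(S)$. A vertex $i\notin S$ has $m_i=0$, so every multicoloring assigns it $\varnothing$, and such vertices impose no constraint. Hence proper multicolorings of $\G$ for $\mvec$ correspond bijectively to proper multicolorings of $\G(S)$ for $\mvec|_S$.

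Next, there are no edges between $S_a$ and $S_b$ for $a\neq b$. So the disjointness condition $\tau(i)\cap\tau(j)=\varnothing$ only couples vertices lying in the same component. A map $\tau$ on $S$ is therefore proper exactly when each restriction $\tau|_{S_a}$ is a proper multicoloring of $\G(S_a)$ for $\mvec|_{S_a}$, and these restrictions can be chosen independently from the same palette $\{1,\ldots,q\}$. Counting gives the product formula for every $q\in\Z_{\geq0}$. By Theorem~\ref{thm:multicoloring-identities}, both sides are polynomials in $q$ that agree at infinitely many points, so the identity holds in $\Q[q]$.

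For the divisibility claim, it suffices to show that $q\mid\pi_{\G(S_a)}^{\mvec|_{S_a}}(q)$ for each $a$. Since $S_a\neq\varnothing$, we have $\mvec|_{S_a}\neq\bzero$, and $P_0(\mvec|_{S_a},\G(S_a))=\varnothing$ by Definition~\ref{def:multicoloring}. In \eqref{eq:chromatic-expansion} the only term that could contribute a constant is $k=0$, because $\binom qk$ has zero constant term for every $k\ge1$. That term vanishes, so $q$ divides each factor, and $q^s$ divides the product.

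Alternatively, \eqref{eq:edge-subsets} gives the same conclusion: every term has $c_H(F)\ge1$ because $H$ is nonempty. No step here seems to be a real obstacle. The only point needing care is the passage from integer $q$ to indeterminate $q$, which the polynomiality in Theorem~\ref{thm:multicoloring-identities} supplies.
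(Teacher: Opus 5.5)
Your proof is correct and follows the route the paper indicates. The paper gives no argument beyond calling the factorization an immediate consequence of the multicoloring definition in \cite{BHV}, which is exactly the independent-choice counting plus polynomial extension you give. Your derivation of $q^s\mid\pi_{\G}^{\mvec}(q)$ from $P_0(\mvec|_{S_a},\G(S_a))=\varnothing$ in \eqref{eq:chromatic-expansion} correctly supplies a detail the paper leaves implicit.
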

}
\section{Denominator quotient and its powers}\label{sec:quotient}

\subsection{}
We use the Weyl--Kac character formula for $\mf g_{\mathrm{re}}$ and the
Weyl--Kac--Borcherds denominator identity for $\mf g(A)$; see \cite[Chapter~10]{Kac} and
\cite{Borcherds}. We use Kang's character form of the quotient
\cite[equation~(2.1)]{Kang} and the established graph identities
recalled above, and give only their single-attachment specializations.
Define
\[
D_{\mf g}=\prod_{\xi\in\Delta^+}(1-e^{-\xi})^{\mult(\xi)},
\qquad
D_{\mf g_{\mathrm{re}}}=\prod_{\al\in\Delta^+_{\mathrm{re}}}
(1-e^{-\al})^{\dim(\mf g_{{\mathrm{re}}})_\al}.
\]
and set
\begin{equation}\label{eq:S-product}
S=\frac{D_{\mf g}}{D_{\mf g_{\mathrm{re}}}}
=\prod_{\xi\in\Psi}(1-e^{-\xi})^{\mult(\xi)}.
\end{equation}
For $P\in\mcal I(\G_{\im})$, set $\ga_P=\displaystyle\sum_{j\in P}\be_{j}$. Note that $-\ga_P$ is a dominant integral weight for $\mf g_{\mathrm{re}}.$ Now choose a
dominant integral weight $\la_P$ satisfying:
\[
\la_P(h_{\alpha_i})=(-\ga_P)(h_{\alpha_i})
=c\delta_{i,u}\mathbf1_{\{v\in P\}},  \qquad i\in I^{\re},
\]
where $\mathbf1_{\{v\in P\}} = 1 ,$ if $v \in P$ and zero otherwise, also define
\[
\operatorname{Ch}_{\mf g_{\mathrm{re}}}(-\ga_P)
=e^{-\ga_P}e^{-\la_P}\operatorname{Ch} V_{\mf g_{\mathrm{re}}}(\la_P).
\]
The normalized-character independence recalled in
subsection \ref{subsec: ch of g_X} makes this independent of the chosen
extension of $\la_P$. The following lemma is immediate.

\begin{lemma}\label{lem:attachment-character}
For every $P\in\mcal I(\G_{\im})$,
\begin{equation}\label{eq:shifted-character}
\operatorname{Ch}_{\mf g_{\mathrm{re}}}(-\ga_P)
=x_PH_\la^{\mathbf1_{\{v\in P\}}},
\end{equation}
where $\mathbf1_{\{v\in P\}} = 1 ,$ if $v \in P$ and zero otherwise. \qed
\end{lemma}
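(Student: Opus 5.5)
The identity is obtained by splitting into the two cases $v\notin P$ and $v\in P$ and computing each side directly from the definitions.

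First I record how $\ga_P$ pairs with the real coroots. For $i\in I^{\re}$ and $j\in I^{\im}$ we have $\be_j(h_{\al_i})=a_{i,\be_j}=-c\,\delta_{i,u}\delta_{j,v}$. Summing over $j\in P$ gives
\[
(-\ga_P)(h_{\al_i})=c\,\delta_{i,u}\mathbf 1_{\{v\in P\}},
\]
which is nonnegative, so $-\ga_P$ is indeed dominant integral for $\mf g_{\re}$. Also $e^{-\ga_P}=\prod_{j\in P}e^{-\be_j}=x_P$ by the substitution $x_j=e^{-\be_j}$.

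\emph{Case $v\notin P$.} Here $\la_P$ vanishes on every simple coroot of $\mf g_{\re}$. The module $V_{\mf g_{\re}}(\la_P)$ is then the one-dimensional module $\C_{\la_P}$, so $\operatorname{Ch}V_{\mf g_{\re}}(\la_P)=e^{\la_P}$. Equivalently, by the normalized-character independence of Subsection~\ref{subsec: ch of g_X}, $e^{-\la_P}\operatorname{Ch}V_{\mf g_{\re}}(\la_P)=H_0=1$. Hence $\operatorname{Ch}_{\mf g_{\re}}(-\ga_P)=x_P=x_PH_\la^{0}$.

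\emph{Case $v\in P$.} Now $\la_P(h_{\al_i})=c\,\delta_{i,u}=\la(h_{\al_i})$ for all $i\in I^{\re}$. The independence statement of Subsection~\ref{subsec: ch of g_X} therefore gives $e^{-\la_P}\operatorname{Ch}V_{\mf g_{\re}}(\la_P)=H_\la$. Multiplying by $e^{-\ga_P}=x_P$ yields $x_PH_\la$.

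The only point needing care is the identification of $\la_P$'s values on the real coroots, i.e.\ the sign convention $\be_j(h_{\al_i})=a_{i,\be_j}$. Once that is fixed, the rest is definitional.
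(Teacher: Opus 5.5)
Your proof is correct and follows the paper's route; the paper simply calls the lemma immediate. You compute $(-\ga_P)(h_{\al_i})=c\,\delta_{i,u}\mathbf 1_{\{v\in P\}}$, so $\la_P$ agrees with $0$ or with $\la$ on the real simple coroots, and the normalized-character independence then gives the factor $1$ or $H_\la$, while $e^{-\ga_P}=x_P$ supplies the rest.
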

\noindent
We recall the following denominator identity for BKM Lie algebra $\mf g$ from \cite{Borcherds}. 
$$\sum\limits_{(\gamma, w) \in \Omega\times W_{I^{\mathrm{re}}}}(-1)^{\mathrm{ht}(\gamma)+\ell(w)}e^{w(\rho-\gamma)-\rho} = \prod\limits_{\alpha\in \Delta^+}(1-e^{-\alpha})^{\mathrm{dim}\mathfrak{g}_\alpha},$$
where $\Omega=\{0\} \cup\{ \ga \in Q^+: \ga= \beta_1+\cdots +\beta_r, \,\, \text{ and } (\be_i,\be_j)=0, \, \text{ for } 1\leq i \neq j \leq r \text{ and } \be_i \text{ are distinct imaginary simple roots} \}.$ It is easy to see that if $\rho'$ is a Weyl vector for $\mf g_{\mathrm{re}}$, then $w(\rho')-\rho'= w(\rho)-\rho,$ for all $w \in W_{I^{\mathrm{re}}}.$   Hence for subalgebra $\mathfrak{g}_{\mathrm{re}}$ of $\mf g$, the denominator identity is given by: 
     $$\sum\limits_{ w\in W_{I^{\mathrm{re}}}}(-1)^{\ell(w)}e^{w(\rho)-\rho}=    \prod\limits_{\alpha\in \Delta^+(\mathfrak{g}_{\mathrm{re}})}(1-e^{-\alpha})^{{(\mathrm{dim}\mathfrak{g}_{\mathrm{re}})}_\alpha},$$
Therefore, we have

\begin{align}\label{main identity}
    S &=\frac{\sum\limits_{(\gamma, w) \in \Omega\times W_{I^{\mathrm{re}}}}(-1)^{\mathrm{ht}(\gamma)+\ell(w)}e^{w(\rho-\gamma)-\rho}}{
\sum\limits_{ w\in W_{I^{\mathrm{re}}}}(-1)^{\ell(w)}e^{w(\rho)-\rho}} \cr
    & =\sum\limits_{\gamma \in \Omega}(-1)^{\mathrm{ht}(\gamma)} \left(\frac{\sum\limits_{w\in W_{I^{\mathrm{re}}}}(-1)^{\ell(w)}e^{w(\rho - \gamma)-(\rho-\gamma)}}{\sum\limits_{w\in W_{I^{\mathrm{re}}}}(-1)^{\ell(w)}e^{w(\rho)-\rho}}\right)e^{-\gamma}
\end{align}
Observe that $-\ga(h_{\alpha_i}) \in \Z_+$ for all $i \in {I^{\mathrm{re}}}$, $\gamma \in \Omega$ and hence $V_{\mf g_{\mathrm{re}}}(-\ga)$ is an irreducible integrable highest weight $\mf g_{\mathrm{re}}$ module. Thus we have 
   $$S= \dis\sum_{\ga \in \Omega} (-1)^{{\rm {ht}}(\gamma)}\operatorname{Ch}_{\mf g_{\mathrm{re}}}V(-\ga) .$$
Moreover, it is easy to observe that each element of $\Omega$ corresponds to a unique element of $\mcal I(\G_{\im})$. Hence we have the following proposition by the use of Lemma \ref{lem:attachment-character}.
\begin{proposition}\label{cor:S}

\begin{equation}\label{eq:S-independence}
{
S=\sum_{P\in\mcal I(\G_{\im})}(-1)^{|P|}
x_PH_\la^{\mathbf1_{\{v\in P\}}}
=I(\G_{\im},\mathbf z),
}
\end{equation}
where $z_v=-x_vH_\la$ and $z_j=-x_j$ for $j\neq v$. In particular, $S-1\in(x_1,\ldots,x_n)\widehat R$.
\end{proposition}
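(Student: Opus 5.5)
The plan is to start from the expression obtained just before the statement,
\[
S=\sum_{\ga\in\Omega}(-1)^{\htop(\ga)}\operatorname{Ch}_{\mf g_{\mathrm{re}}}(-\ga),
\]
and convert it into a sum over independent sets.

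\textbf{Step 1: the bijection $\Omega\leftrightarrow\mcal I(\G_{\im})$.} Every imaginary simple root here is isotropic, so $(\be_i,\be_i)=0$. For distinct $i,j\in I^{\im}$, symmetrizability with the common factor $d_u$ on $I^{\im}$ gives $(\be_i,\be_j)=d_u c_{ij}$. This is nonzero exactly when $i,j$ are adjacent in $\G_{\im}$. Hence the map $P\mapsto\ga_P$ sends $\mcal I(\G_{\im})$ onto $\Omega$, with $\varnothing\mapsto 0$. It is injective because the $\be_j$ are linearly independent in the chosen realization. Under this bijection $\htop(\ga_P)=|P|$.

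\textbf{Step 2: identify each summand.} The summand indexed by $\ga_P$ is $\operatorname{Ch}_{\mf g_{\mathrm{re}}}(-\ga_P)$, understood as the normalized quotient of alternating sums in \eqref{main identity}. To match it with the definition given before Lemma~\ref{lem:attachment-character}, I will use that the Weyl--Kac quotient depends on $-\ga_P$ only through its values on the real coroots. Indeed, $w(\rho-\ga)-(\rho-\ga)$ is computed from the simple reflections, which only see $\langle\cdot,h_{\al_i}\rangle$ for $i\in I^{\mathrm{re}}$.

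So I will replace $-\ga_P$ by $\la_P$. The Weyl--Kac character formula then turns the quotient into $e^{-\la_P}\operatorname{Ch}V_{\mf g_{\mathrm{re}}}(\la_P)$. Multiplying by $e^{-\ga_P}=x_P$ and applying Lemma~\ref{lem:attachment-character} gives $x_PH_\la^{\mathbf 1_{\{v\in P\}}}$. Here I use two facts:
\begin{itemize}
\item $\la_P(h_{\al_i})=c\,\delta_{i,u}\mathbf 1_{\{v\in P\}}$, because $\be_j(h_{\al_i})=a_{i\be_j}=-c\,\delta_{i,u}\delta_{j,v}$;
\item $H_0=1$.
\end{itemize}

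\textbf{Step 3: assemble.} Summing over $P$ gives the first equality of \eqref{eq:S-independence}. Next, $v$ lies in at most one position of $x_P$, and an independent set contains $v$ at most once. Therefore
\[
(-1)^{|P|}x_PH_\la^{\mathbf 1_{\{v\in P\}}}=\prod_{j\in P}z_j,
\]
and summing over $\mcal I(\G_{\im})$ gives $I(\G_{\im},\mathbf z)$. Finally, the only term without any $x_j$ is $P=\varnothing$, which contributes $1$. Every other term has the factor $x_P$ with $P\neq\varnothing$, so $S-1\in(x_1,\dots,x_n)\widehat R$.

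\textbf{Main obstacle.} The delicate point is Step 2: justifying that the ratio of alternating sums equals the normalized $\mf g_{\mathrm{re}}$-character when $\rho-\ga_P$ is a weight of $\mf g(A)$ rather than of $\mf g_{\mathrm{re}}$. I would handle this by restricting to $\mf g_{\mathrm{re}}$-coroots, using $w(\rho')-\rho'=w(\rho)-\rho$. I would then invoke the independence of the normalized character from the extension of the weight, recalled in subsection~\ref{subsec: ch of g_X}.
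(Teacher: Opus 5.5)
Your proposal is correct and follows the paper's route: start from $S=\sum_{\ga\in\Omega}(-1)^{\htop(\ga)}\operatorname{Ch}_{\mf g_{\mathrm{re}}}(-\ga)$, identify $\Omega$ with $\mcal I(\G_{\im})$ via $P\mapsto\ga_P$, and evaluate each summand by Lemma~\ref{lem:attachment-character}. You also fill in two details the paper treats as ``easy to observe'': the orthogonality computation $(\be_i,\be_j)=d_u c_{ij}$ behind the bijection, and the fact that the Weyl--Kac quotient depends only on the values of the weight on the real coroots.
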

\begin{remark}
    Proposition \ref{cor:S}, can be obtained from [\cite{Kang}, equation (2.1)] by taking $J=I^{\re}$ as the subset of $I^{re}$ and the specialization introduced here.  
\end{remark}
\subsection{}
For $t\in\Z_{\geq0}$, define
\begin{equation}\label{eq:M}
M_\la(\eta;t)
=\dim\left(V_{\mf g_{\mathrm{re}}}(\la)^{\otimes t}\right)_{t\la-\eta}.
\end{equation}
Here $V_{\mf g_{\mathrm{re}}}(\la)^{\otimes0}=\C$ and
$M_\la(\eta;0)=\delta_{\eta,0}$. Further we get that $M_\la(\eta;t)=H_\la^t[e^{-\eta}]$, equality holds because the
character of a tensor product is the product of the characters. In particular,
$M_\la(0;t)=1$ for every $t$.

\begin{corollary}
\label{cor:coefficient}
For $\mu=\eta+\nu(\mvec)$ and an integer indeterminate $q$, the following holds.
\begin{equation}
S^q[\bx^{\mvec}]
=(-1)^{|\mvec|}H_\la^{m_v}\pi_{\G_{\im}}^{\mvec}(q).
\label{eq:imaginary-coefficient}
\end{equation}
\begin{equation}
{S^q[e^{-\mu}]
=(-1)^{|\mvec|}M_\la(\eta;m_v)\pi_{\G_{\im}}^{\mvec}(q).}
\label{eq:coefficient}
\end{equation}
\begin{equation}\label{eq:coefficient-edge-sum}
S^q[e^{-\mu}]
=\frac{(-1)^{|\mvec|}M_\la(\eta;m_v)}{\mvec!}
\sum_{F\subseteq E(\mcal G_{\im}(\mathbf m))}(-1)^{|F|}q^{c_
{\mcal G_{\im}[\mathbf m]}(F)}.
\end{equation}
\end{corollary}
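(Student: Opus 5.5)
The plan is to combine Proposition~\ref{cor:S} with the independence-power identity \eqref{eq:independence-power}. The key observation is that $H_\la$ involves only the real variables $y_i$. We may therefore regard $\widehat R$ as a power series ring in the $x_j$ with coefficients in the ring $\Q[[y_i:i\in I^{\mathrm{re}}]]$, where $H_\la$ is a unit with constant term $1$.

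By Proposition~\ref{cor:S}, $S=I(\G_{\im},\mathbf z)$ with $z_v=-x_vH_\la$ and $z_j=-x_j$ for $j\neq v$. The identity \eqref{eq:independence-power} is an identity in $\Q[q][[x_1,\dots,x_n]]$, obtained from the definition \eqref{eq:formal-power} of $(1+U)^q$ with $U=I(\G,\bx)-1$. It may be specialized along any continuous substitution $x_j\mapsto z_j$ into a complete ring in which each $z_j$ lies in the ideal $(x_1,\dots,x_n)$.

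The substitution $x_j\mapsto z_j$ is such a ring homomorphism $\Q[q][[x]]\to\Q[q][[y,x]]$. It is continuous for the $(x)$-adic topology, because each $z_j$ lies in $(x)$. It also commutes with forming $(1+U)^q$, since that power is a convergent sum of polynomial expressions in $U$. Hence
\[
S^q=\sum_{\mvec\ge 0}\pi_{\G_{\im}}^{\mvec}(q)\,\mathbf z^{\mvec}
=\sum_{\mvec\ge0}(-1)^{|\mvec|}\pi_{\G_{\im}}^{\mvec}(q)\,H_\la^{m_v}\bx^{\mvec}.
\]
Since $H_\la^{m_v}$ contains no $x$-variables, the monomials $\bx^{\mvec}$ for different $\mvec$ do not interact. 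Reading off the coefficient of $\bx^{\mvec}$, taken over $\Q[q][[y]]$, gives \eqref{eq:imaginary-coefficient}.

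For \eqref{eq:coefficient}, write $e^{-\mu}=\by^{\bk}\bx^{\mvec}$ and extract the coefficient of $\by^{\bk}$ from $H_\la^{m_v}$. By the remark after \eqref{eq:M}, this coefficient is $M_\la(\eta;m_v)$, since the character of a tensor power is the power of the character. When $m_v=0$ this reads $\delta_{\eta,0}$, which is consistent with the statement. Finally, \eqref{eq:coefficient-edge-sum} follows by substituting the Whitney expansion \eqref{eq:edge-subsets} of Proposition~\ref{prop:blow-up} for $\pi^{\mvec}_{\G_{\im}}(q)$. Here the edge set is that of the clique blow-up $\G_{\im}[\mvec]$.

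The only step needing care is the justification of the substitution. The paper defines $S^q$ via \eqref{eq:formal-power} in the completed ring with $q$ an indeterminate, whereas \eqref{eq:independence-power} is stated for the graph variables. I would handle this by noting that both sides of \eqref{eq:independence-power} have coefficients polynomial in $q$ and agree for all nonnegative integers $q$. It is then enough to check that the substituted series agree for each integer $q\ge0$, where $S^q$ is an honest power and the substitution is a plain ring homomorphism. Coefficientwise polynomial identity in $q$ then follows, because a polynomial vanishing at infinitely many integers is zero. Each coefficient involves only finitely many terms, since $S-1\in(x)$.
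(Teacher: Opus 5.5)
Your proposal is correct and follows the paper's proof. Like the paper, you substitute $\mathbf z$ into the independence-power identity \eqref{eq:independence-power}, read off the coefficient of $\bx^{\mvec}$, use $M_\la(\eta;t)=H_\la^t[e^{-\eta}]$, and then insert the Whitney expansion; your justification of the substitution step (continuity in the $(x_1,\dots,x_n)$-adic topology, or agreement at every integer $q\ge 0$ together with polynomiality in $q$) fills in a point the paper leaves implicit, and you rightly read the edge set in \eqref{eq:coefficient-edge-sum} as that of the clique blow-up $\G_{\im}[\mvec]$.
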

\begin{proof}
By using the identity \eqref{eq:independence-power} in equation (\ref{eq:S-independence}), each monomial becomes
\[
\mathbf z^{\mvec}=(-1)^{|\mvec|}\bx^{\mvec}H_\la^{m_v},
\]
which proves the equation (\ref{eq:imaginary-coefficient}) and then using the fact $M_\la(\eta;t)=H_\la^t[e^{-\eta}]$ we get \eqref{eq:coefficient}. Again substitution of the Whitney expansion \eqref{eq:edge-subsets} in \eqref{eq:coefficient} gives the form \eqref{eq:coefficient-edge-sum}.
\end{proof}

\subsection{The chromatic character} For $\nu(\mathbf m)$ with $\mathbf m \geq 0$, define
\begin{equation}\label{eq:chromatic-character-definition}
\pi_{\G_{\mathrm{im}}}^{\mathrm{ch},\mvec}(q)
=\sum_{k=0}^{|\mvec|}\binom qk
\sum_{(P_1,\ldots,P_k)\in P_k(\mvec,\G_{\mathrm{im}})}
\prod_{r=1}^k\operatorname{Ch}_{\mf g_{\mathrm{re}}}(-\ga_{P_r}).
\end{equation}
The empty tuple contributes $1$ when $\mvec=\bzero$. Note that every tuple in \eqref{eq:chromatic-character-definition} has the
same imaginary monomial $e^{-\nu(\mvec)}$ and the same real factor
$H_\la^{m_v}$. Factoring them out and using
\eqref{eq:chromatic-expansion} we get the following formula.
\begin{equation}\label{eq:chromatic-character}
\pi_{\G_{\mathrm{im}}}^{\mathrm{ch},\mvec}(q)
=e^{-\nu(\mvec)}H_\la^{m_v}\pi_{\G_{\mathrm{im}}}^{\mvec}(q).
\end{equation}

\section{Linear coefficients and root multiplicities}\label{sec:multiplicities}

\subsection{The graph coefficient}
For $\mvec\neq\bzero$, put
\begin{equation}\label{eq:kappa}
\kappa_{\G}(\mvec)=(-1)^{|\mvec|-1}\pi_{\G}^{\mvec}(q)[q].
\end{equation}
We call $\kappa_{\mcal G}(\mvec) $ the graph coefficient of the graph $\mcal G$. Note that Proposition \ref{prop:components} implies, if $\mcal G(\mathbf m)$ is disconnected then $q^2|\pi_{\G}^{\mvec}(q).$ Now the following proposition follows by taking coefficient of $q$ in equations \eqref{eq:chromatic-expansion} and  \eqref{eq:edge-subsets} with the noted fact.

\begin{proposition}\label{prop:kappa-general}
For an arbitrary finite simple graph $\mcal G$ following expressions holds for $\kappa_{\G}(\mvec).$
\begin{align}
\kappa_{\G}(\mvec)
&=\sum_{k=1}^{|\mvec|}\frac{(-1)^{|\mvec|+k}}{k}
|P_k(\mvec,\G)|,\label{eq:kappa-tuples}\\
\kappa_{\G}(\mvec)
&=\frac{(-1)^{|\mvec|-1}}{\mvec!}
\sum_{\substack{F\subseteq E(\G[\mvec])\\
(V(\G[\mvec]),F)\text{ connected}}}(-1)^{|F|}.
\label{eq:kappa-edges}
\end{align}
If $\G(\supp(\mvec))$ is disconnected, then $\kappa_{\G}(\mvec)=0$.
\end{proposition}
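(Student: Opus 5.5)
The plan is to read off the coefficient of $q$ from each of the two expansions of $\pi_{\G}^{\mvec}(q)$ already recorded. Then multiply by the sign $(-1)^{|\mvec|-1}$ from the definition \eqref{eq:kappa}. The vanishing statement comes from Proposition~\ref{prop:components}.

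For \eqref{eq:kappa-tuples}, start from the multicoloring expansion \eqref{eq:chromatic-expansion}, $\pi_{\G}^{\mvec}(q)=\sum_{k=0}^{|\mvec|}|P_k(\mvec,\G)|\binom qk$. Since $\mvec\neq\bzero$, the $k=0$ term vanishes because $P_0(\mvec,\G)=\varnothing$. For $k\geq1$, the linear coefficient of $\binom qk=q(q-1)\cdots(q-k+1)/k!$ is
\[
\frac{(-1)(-2)\cdots(-(k-1))}{k!}=\frac{(-1)^{k-1}(k-1)!}{k!}=\frac{(-1)^{k-1}}{k}.
\]
Multiplying by $(-1)^{|\mvec|-1}$ gives $(-1)^{|\mvec|+k}/k$, which is exactly \eqref{eq:kappa-tuples}.

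For \eqref{eq:kappa-edges}, use the Whitney form \eqref{eq:edge-subsets} with $H=\G[\mvec]$. Here $|V(H)|=|\mvec|\geq1$, so $c_H(F)\geq1$ for every $F$. Hence $q^{c_H(F)}$ contributes to the coefficient of $q$ precisely when $c_H(F)=1$, that is, when the spanning graph $(V(H),F)$ is connected. Extracting these terms and multiplying by $(-1)^{|\mvec|-1}$ yields \eqref{eq:kappa-edges}.

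For the vanishing claim, suppose $\G(\supp(\mvec))$ has $s\geq2$ components. Proposition~\ref{prop:components} gives $q^s\mid\pi_{\G}^{\mvec}(q)$, so $q^2$ divides it and the linear coefficient is $0$. This agrees with \eqref{eq:kappa-edges}, since in that case $H$ itself is disconnected and no connected spanning subgraph exists.

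No step is a real obstacle. The only points needing care are the sign bookkeeping in the linear coefficient of $\binom qk$, and the observation that $c_H(F)\ge1$ uses $\mvec\ne\bzero$.
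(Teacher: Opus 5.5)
Your proposal is correct and follows the paper's own argument. Like the paper, you extract the coefficient of $q$ from the multicoloring expansion \eqref{eq:chromatic-expansion} and from the Whitney expansion \eqref{eq:edge-subsets}, multiply by $(-1)^{|\mvec|-1}$, and deduce the vanishing from the divisibility $q^s\mid\pi_{\G}^{\mvec}(q)$ in Proposition~\ref{prop:components}; your explicit computation that the linear coefficient of $\binom qk$ is $(-1)^{k-1}/k$ fills in the one step the paper leaves implicit.
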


\subsection{The logarithm and divisor relation}
For $S=1+U$, using \eqref{eq:formal-power} we have
\begin{equation}\label{eq:logarithm}
S^q[q]=\sum_{k\geq1}\frac{(-1)^{k-1}}kU^k=\log S.
\end{equation}
For $\mu=\eta(\mathbf k)+\nu(\mvec)$ with $\mvec\neq\bzero$, define
\begin{align*}
g=g(\bk,\mvec)
&=\gcd\bigl(\{k_i:i\in {I^{\mathrm{re}}}\}\cup\{m_j:j\in {I^{\mathrm{im}}}\}\bigr),\\
C(\mu)&=-S^q[e^{-\mu}][q]=(-\log S)[e^{-\mu}].
\end{align*}
Therefore by equations \eqref{eq:coefficient} and \eqref{eq:kappa} we have,
\begin{equation}\label{eq:C-factor}
C(\mu)=M_\la(\eta;m_v)\kappa_{\G}(\mvec).
\end{equation}
The product expression \eqref{eq:S-product} gives
\begin{equation}\label{eq:logarithmic-product}
-\log S=\sum_{\xi\in\Psi}\mult(\xi)
\sum_{d\geq1}\frac{e^{-d\xi}}d.
\end{equation}
Now taking coefficient of $e^{-\mu}$ in $-\log S$ and using \eqref{eq:C-factor}, we have 
\begin{equation}\label{eq:divisor-relation}
C(\mu)=\sum_{d\mid g}\frac1d\mult(\mu/d).
\end{equation}

\begin{theorem}\label{cor:root-general}
If $\mvec\neq\bzero$, then
\begin{equation}\label{eq:root-general}
{
\mult\bigl(\eta(\mathbf k)+\nu(\mvec)\bigr)
=\sum_{d\mid g(\bk,\mvec)}\frac{\Mob(d)}d
M_\la\left(\frac\eta d;\frac{m_v}d\right)
\kappa_{\G_{\mathrm{im}}}\left(\frac\mvec d\right),
}
\end{equation}
here $\Mob$ is the M\"obius function. In particular,
disconnected imaginary support implies $\mult(\mu)=0$.
\end{theorem}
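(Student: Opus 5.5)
We plan to obtain \eqref{eq:root-general} by Möbius inversion of the divisor relation \eqref{eq:divisor-relation}, applied along each ray in $Q^+$. Fix $\mu=\eta(\bk)+\nu(\mvec)$ with $\mvec\neq\bzero$. Let $\mu_0=\mu/g$ be the primitive vector on the ray through $\mu$, with $g=g(\bk,\mvec)$ as before. Then every point of the ray is $\mu_0 e$ with $e\ge1$, and it still has nonzero imaginary part.

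First we check \eqref{eq:divisor-relation} carefully. In \eqref{eq:logarithmic-product}, the coefficient of $e^{-\mu}$ collects the pairs $(\xi,d)$ with $d\xi=\mu$, so $\xi=\mu/d$ must lie in $Q^+$. This forces $d\mid g$. We must also have $\xi\in\Psi$. Here we extend $\mult$ by zero to nonroots, and we note that every $\mu/d$ has nonzero imaginary part. Positive roots with nonzero imaginary part are exactly the elements of $\Psi$, since $\Delta^+_{\mathrm{re}}\subset Q^+_{\mathrm{re}}$. Hence the sum over $\Psi$ becomes a sum over $d\mid g$ with no restriction. Set $f(e)=\mult(e\mu_0)$ and $F(e)=C(e\mu_0)$. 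The relation then reads
\[
F(e)=\sum_{d\mid e}\frac1d f(e/d),\qquad\text{equivalently}\qquad eF(e)=\sum_{d\mid e}\frac ed f(e/d).
\]
Put $\tilde f(e)=e f(e)$ and $\tilde F(e)=eF(e)$. The relation becomes $\tilde F=\tilde f*\mathbf 1$ as a Dirichlet convolution. Classical Möbius inversion then gives $\tilde f(e)=\sum_{d\mid e}\Mob(d)\tilde F(e/d)$. Dividing by $e$ yields $f(g)=\sum_{d\mid g}\frac{\Mob(d)}{d}F(g/d)$.

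It remains to substitute \eqref{eq:C-factor}. For each $d\mid g$, the vector $\mu/d=\eta/d+\nu(\mvec/d)$ has imaginary part $\mvec/d\neq\bzero$, so $C(\mu/d)=M_\la(\eta/d;m_v/d)\,\kappa_{\G_{\im}}(\mvec/d)$. This gives exactly \eqref{eq:root-general}. For the final claim, note that $\supp(\mvec/d)=\supp(\mvec)$ for every $d\mid g$. So if $\G_{\im}(\supp(\mvec))$ is disconnected, Proposition~\ref{prop:kappa-general} makes every $\kappa_{\G_{\im}}(\mvec/d)$ vanish, and hence $\mult(\mu)=0$.

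The main point needing care, rather than real difficulty, is the bookkeeping in the first step. We must confirm that the only $\xi$ contributing to $e^{-\mu}$ in $-\log S$ are the $\mu/d$ with $d\mid g$, and that no real roots sneak in. We must also verify the identity $S^q[q]=\log S$ used to define $C(\mu)$. This holds because $\binom qk[q]=(-1)^{k-1}/k$, and each coefficient involves only finitely many $k$, since $S-1\in(x_1,\dots,x_n)\widehat R$ by Proposition~\ref{cor:S}.
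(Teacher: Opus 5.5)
Your proposal is correct and follows the paper's route: Möbius inversion of the divisor relation \eqref{eq:divisor-relation}, substitution of \eqref{eq:C-factor} at each $\mu/d$, and Proposition~\ref{prop:kappa-general} for the disconnected case. Your rescaling $\tilde f(e)=e\,f(e)$, $\tilde F(e)=e\,F(e)$ along the primitive ray, together with the check that only the $\xi=\mu/d$ with $d\mid g$ and nonzero imaginary part contribute, makes explicit the weighted inversion that the paper invokes without detail.
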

\begin{proof}
Proof of \eqref{eq:root-general} follows from \eqref{eq:divisor-relation} by applying M\"obius inversion formula. Further, last statement follows from Proposition \ref{prop:kappa-general}.
\end{proof}
\begin{corollary}
    If $\mvec, \mathbf k \neq 0$ but $m_v =0$. Then $\mult(\eta(\mathbf k)+\nu(\mathbf m))=0.$ 
\end{corollary}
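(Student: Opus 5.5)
The plan is to apply Theorem~\ref{cor:root-general} directly and show that every summand on the right-hand side of \eqref{eq:root-general} vanishes. Write $\mu=\eta(\bk)+\nu(\mvec)$ with $\mvec\neq\bzero$, $\bk\neq\bzero$ and $m_v=0$, and set $g=g(\bk,\mvec)$. For each divisor $d\mid g$, the summand contains the factor
\[
M_\la\left(\frac\eta d;\frac{m_v}d\right)=M_\la\left(\frac\eta d;0\right).
\]
By the convention in \eqref{eq:M}, $V_{\mf g_{\mathrm{re}}}(\la)^{\otimes0}=\C$, so $M_\la(\zeta;0)=\delta_{\zeta,0}$.

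Since $\bk\neq\bzero$, some $k_i>0$, and hence $\eta/d=\sum_i (k_i/d)\al_i\neq0$ for every divisor $d$ of $g$. Here $k_i/d$ is a positive integer because $d\mid g\mid k_i$. Therefore $M_\la(\eta/d;0)=0$ for every $d\mid g$.

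The graph coefficients $\kappa_{\G_{\im}}(\mvec/d)$ are finite numbers, well defined because $\mvec/d\neq\bzero$. Each summand is thus zero, the whole divisor sum vanishes, and $\mult(\mu)=0$.

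There is essentially no obstacle. The only point to check is that the hypotheses of Theorem~\ref{cor:root-general} hold, namely $\mvec\neq\bzero$, which is given. One also needs $m_v/d=0$, which is immediate, and $\eta/d$ a nonzero element of $Q^+_{\mathrm{re}}$, which was verified above.

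As a sanity check, the same conclusion follows more conceptually from \eqref{eq:imaginary-coefficient}. When $m_v=0$, the coefficient $S^q[\bx^{\mvec}]$ equals $(-1)^{|\mvec|}\pi^{\mvec}_{\G_{\im}}(q)$, which has no dependence on the $y_i$. So $C(\mu)=0$ whenever $\bk\neq\bzero$. The divisor relation \eqref{eq:divisor-relation}, applied by induction on $g$, then also forces $\mult(\mu)=0$.
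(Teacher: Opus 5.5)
Your proposal is correct and follows the paper's proof: both apply Theorem~\ref{cor:root-general} and note that every summand carries the factor $M_\la(\eta/d;0)=\delta_{\eta/d,0}=0$, since $\eta/d\neq0$ for each $d\mid g$. Your additional argument via \eqref{eq:imaginary-coefficient} and induction on the divisor relation is also valid, but it is not needed.
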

\begin{proof}
    Note that if $m_v=0$ and $\eta=\eta(\mathbf k)\neq0$, then $M_\la(\frac{\eta}{d};0)=0$, by subsection 4.2. Hence the result by Theorem \ref{cor:root-general}.
\end{proof}

\begin{remark}
    If $\mvec=\bzero$, then $S^q[e^{-\eta}]=\delta_{\eta,0}$, since
$S-1 \in (x_1,\dots,x_n)\widehat R$ . In this case root is purely $I^{\mathrm{re}}$-supported and hence root multiplicities are those of $\mf g_{\mathrm{re}}$ and can not be computed from $S$.
\end{remark}

\section{Chordal graphs}\label{sec:chordal}

\subsection{Perfect elimination ordering}
We follow the convention of \cite[Section 2.6]{BHV}. Define the closed
neighborhood of a vertex $r$ of the graph $\mcal G$ by
\[
N_{\G}[r]=\{r\}\cup\{s\in I:e(s,r)\in E\}.
\]
An ordering of $I=\{1,\ldots,n\}$ is a perfect elimination ordering
if the earlier neighborhood
$N_{\G}[r]\cap\{1,\ldots,r-1\}$ induces a clique for every $r$.
A graph is chordal if it has no induced cycle of length
at least four. 
It is well known that (see \cite{Dirac}), \cite[Proposition~3]{BHV}; see also
\cite[Theorem~4.1]{Golumbic}), we have that
\begin{theorem}\label{thm:chordal-peo}
A finite simple graph is chordal if and only if it admits a perfect
elimination ordering as above.
\end{theorem}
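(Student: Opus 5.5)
The plan is to prove the two implications separately. The direction from a perfect elimination ordering to chordality is a short extremal argument. The converse goes through Dirac's lemma on simplicial vertices, and that lemma is the main obstacle. Throughout, a vertex is \emph{simplicial} if its neighbours form a clique, and we use the fact that induced subgraphs of chordal graphs are chordal.

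\emph{Perfect elimination ordering implies chordal.} Suppose the ordering $1,\ldots,n$ is a perfect elimination ordering, and suppose $C$ is an induced cycle of length at least four. Let $r$ be the vertex of $C$ with the largest label. Its two neighbours $s,s'$ on $C$ have smaller labels, so both lie in $N_{\G}[r]\cap\{1,\ldots,r-1\}$. This set induces a clique, so $s$ and $s'$ are adjacent. Since $C$ has length at least four, the edge $ss'$ is not an edge of $C$, so it is a chord. This contradicts $C$ being induced.

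\emph{Chordal implies a perfect elimination ordering.} This reduces to the following lemma.

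\emph{Lemma (Dirac).} Every chordal graph is complete or has two nonadjacent simplicial vertices.

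Given the lemma, induct on $n$. Take a simplicial vertex $w$ of the chordal graph $\G$, give it the label $n$, and order $\G_{I\setminus\{w\}}$ by induction; this subgraph is chordal, being induced. Every neighbour of $w$ now comes earlier, so the earlier neighbourhood of $w$ is its full neighbourhood, which is a clique. The earlier neighbourhoods of all other vertices are computed inside $\G_{I\setminus\{w\}}$, so the induction hypothesis covers them.

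\emph{Step 1: minimal separators are cliques.} Let $a,b$ be nonadjacent vertices of a chordal graph, and let $S$ be an inclusion-minimal vertex set separating $a$ from $b$. Let $C_a,C_b$ be the components of $\G-S$ containing $a$ and $b$. By minimality, every $x\in S$ has a neighbour in $C_a$ and a neighbour in $C_b$. Take distinct $x,y\in S$, and take shortest $x$--$y$ paths whose interior lies in $C_a$ and in $C_b$ respectively. Joining the two paths gives a cycle of length at least four. There are no edges between $C_a$ and $C_b$, and each path is shortest, so the only possible chord of this cycle is $xy$. Chordality therefore forces $x\sim y$, so $S$ is a clique.

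\emph{Step 2: the lemma, by induction on $n$.} If $\G$ is complete there is nothing to prove. Otherwise choose nonadjacent $a,b$ and a minimal separator $S$ as in Step 1, and apply the induction hypothesis to the chordal graph $\G_{C_a\cup S}$.
\begin{itemize}
\item If $\G_{C_a\cup S}$ is complete, any vertex of $C_a$ is simplicial in it.
\item Otherwise it has two nonadjacent simplicial vertices. These cannot both lie in the clique $S$, so one of them lies in $C_a$.
\end{itemize}
A vertex of $C_a$ has all its $\G$-neighbours inside $C_a\cup S$, so in either case we obtain a vertex of $C_a$ that is simplicial in $\G$. The same argument with $C_b$ gives a simplicial vertex of $\G$ in $C_b$. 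These two vertices are nonadjacent because $S$ separates $C_a$ from $C_b$. This completes the lemma, and hence the converse direction.
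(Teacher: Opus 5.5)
Your proof is correct. It is the classical Dirac argument: minimal separators in a chordal graph are cliques, so a non-complete chordal graph has two nonadjacent simplicial vertices, and one peels off a simplicial vertex as the last label. This is precisely the argument the paper defers to by citing \cite{Dirac} and \cite[Theorem~4.1]{Golumbic} instead of giving its own proof. You have also adapted it correctly to the paper's reversed convention, in which the \emph{earlier} neighbours must form a clique: the simplicial vertex gets the largest label, and the converse direction uses the largest-labelled vertex of a putative induced cycle.
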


\subsection{The product formula}
Fix such a perfect elimination ordering and put
\begin{equation}\label{eq:a-r}
a_r(\mvec)=\sum_{\substack{1\leq s\leq r\\s\in N_{\G}[r]}}m_s
=m_r+\sum_{\substack{s<r\\e(s,r)\in E}}m_s.
\end{equation}
We record the following identities from \cite[Theorem~6 and
equation~(2.1)]{BHV}.
\begin{theorem}\label{thm:chordal-product}
For every $\mvec\geq0$,
\begin{align}
\pi_{\G}^{\mvec}(q)
&=\prod_{r=1}^n\binom{q-a_r(\mvec)+m_r}{m_r},
\label{eq:chordal-pi}\\
\pi_{\G}^{\mvec}(-q)
&=(-1)^{|\mvec|}\prod_{r=1}^n\binom{q-1+a_r(\mvec)}{m_r}.
\label{eq:chordal-negative-pi}
\end{align}
Both are polynomial identities in $q$.
\end{theorem}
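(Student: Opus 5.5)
The plan is to prove \eqref{eq:chordal-pi} by peeling off vertices along the perfect elimination ordering and counting colorings greedily, and then to get \eqref{eq:chordal-negative-pi} formally from the first identity.

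\textbf{Step 1: the count for a nonnegative integer $q$.} Process the vertices in the order $1,2,\ldots,n$, coloring each in turn.

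When we reach vertex $r$, consider its earlier closed neighbourhood $N_{\G}[r]\cap\{1,\ldots,r-1\}$. By the perfect elimination property this set induces a clique, so its vertices have already received pairwise disjoint color sets. Hence the colors forbidden at $r$ form a set of size exactly
\[
\sum_{s<r,\ e(s,r)\in E} m_s = a_r(\mvec)-m_r,
\]
and this size does not depend on the particular choices made earlier.

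The admissible color sets for $r$ are therefore the $m_r$-subsets of a set of $q-a_r(\mvec)+m_r$ available colors. There are
\[
\binom{q-a_r(\mvec)+m_r}{m_r}
\]
of them, and this is $0$ when fewer than $m_r$ colors remain. Later vertices only constrain themselves against earlier ones, so every complete proper multicoloring arises exactly once from this sequential process. Multiplying over $r=1,\ldots,n$ gives \eqref{eq:chordal-pi} at every integer $q\geq 0$.

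\textbf{Step 2: upgrade to a polynomial identity.} Both sides of \eqref{eq:chordal-pi} are polynomials in $q$:
\begin{itemize}
\item the left side by Theorem~\ref{thm:multicoloring-identities};
\item the right side because $\binom{X}{m}$ is a polynomial in $X$.
\end{itemize}
They agree at infinitely many integers, so they coincide as polynomials.

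The one point needing care is the range of small $q$. For $0\leq q< a_r(\mvec)-m_r$ the top entry $q-a_r(\mvec)+m_r$ is negative, and $\binom{X}{m}$ for negative $X$ is not zero in general. It is safest to use only the values $q\geq\max_r a_r(\mvec)$, where no such issue arises; that is still infinitely many points.

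\textbf{Step 3: the negative evaluation \eqref{eq:chordal-negative-pi}.} Substitute $-q$ into \eqref{eq:chordal-pi} and apply the reflection identity
\[
\binom{-X}{m}=(-1)^m\binom{X+m-1}{m}
\]
with $X=q+a_r(\mvec)-m_r$. Each factor becomes
\[
(-1)^{m_r}\binom{q-1+a_r(\mvec)}{m_r}.
\]
Taking the product over $r$ collects the signs into $(-1)^{|\mvec|}$, which is exactly \eqref{eq:chordal-negative-pi}.

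I do not expect a real obstacle. The only subtle points are the fact that the forbidden count in Step 1 is independent of earlier choices, which is exactly what the clique condition in the ordering guarantees, and the negative-binomial range handled in Step 2.
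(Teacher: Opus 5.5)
Your proof is correct. The paper does not prove this theorem itself: it imports the product formula from \cite{BHV} (Theorem~6) and the negative-argument version from equation~(2.1) there. Your proposal therefore replaces a citation with a self-contained argument, and that self-containedness is what it buys.

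Your route has three steps:
\begin{enumerate}
\item A greedy count of multicolorings along the perfect elimination ordering. The clique condition on earlier neighbours forces exactly $a_r(\mvec)-m_r$ forbidden colors at vertex $r$, whatever the earlier choices were.
\item Upgrading to a polynomial identity by agreement at infinitely many integers.
\item The reflection $\binom{-X}{m}=(-1)^m\binom{X+m-1}{m}$ for the negative evaluation.
\end{enumerate}
This is the natural multicoloring version of the classical chordal chromatic-polynomial count. It correctly uses the paper's convention that earlier neighbours, rather than later ones, form a clique.

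One small remark: the hedge in Step~2 is not needed, because the identity already holds at every integer $q\geq 0$. Suppose the sequential process first gets stuck at vertex $r$. The earlier neighbours of $r$ form a clique and already carry $a_r(\mvec)-m_r\leq q$ distinct colors. So the top entry $q-a_r(\mvec)+m_r$ is nonnegative and smaller than $m_r$, and the factor $\binom{q-a_r(\mvec)+m_r}{m_r}$ is genuinely $0$. It therefore kills the product, whatever the later factors with negative top entries are. Restricting to $q\geq\max_r a_r(\mvec)$, as you do, is also sufficient for the polynomial identity.
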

Combining \eqref{eq:chordal-pi} and \eqref{eq:chordal-negative-pi}
with Corollary~\ref{cor:coefficient} gives the following
identities.

\begin{corollary}\label{cor:chordal-coefficients}
For a chordal graph $\G_{\im}$ with a perfect elimination ordering the following identities hold.
\begin{equation}\label{eq:chordal-coefficient}
S^q[e^{-(\eta+\nu(\mvec))}]
=(-1)^{|\mvec|}M_\la(\eta;m_v)
\prod_{r=1}^n\binom{q-a_r(\mvec)+m_r}{m_r}.
\end{equation}
In particular, for every positive integer $p$,
\begin{equation}\label{eq:chordal-inverse}
{
S^{-p}[e^{-(\eta+\nu(\mvec))}]
=M_\la(\eta;m_v)
\prod_{r=1}^n\binom{p-1+a_r(\mvec)}{m_r}.
}
\end{equation}

\end{corollary}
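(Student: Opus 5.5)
The first identity \eqref{eq:chordal-coefficient} follows by substituting the product formula \eqref{eq:chordal-pi} of Theorem~\ref{thm:chordal-product} into \eqref{eq:coefficient} of Corollary~\ref{cor:coefficient}. Both sides are polynomials in the indeterminate $q$, and \eqref{eq:coefficient} holds coefficientwise as a polynomial identity in $q$. The substitution is therefore legitimate for indeterminate $q$, and no further argument is needed.

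For \eqref{eq:chordal-inverse}, I would specialize $q=-p$ with $p$ a positive integer. The key point is that the definition \eqref{eq:formal-power} of $(1+U)^q$ is compatible with specialization. The coefficient of each monomial $e^{-\mu}$ in $(1+U)^q$ is a finite sum $\sum_k \binom qk U^k[e^{-\mu}]$, which is a polynomial in $q$. Evaluating this polynomial at $q=-p$ gives $\sum_k\binom{-p}{k}U^k[e^{-\mu}]$, the coefficient of $e^{-\mu}$ in the binomial series for $(1+U)^{-p}$. In $\widehat R$ this series is the genuine multiplicative inverse of $S^p$, since $S=1+U$ with $U\in(x_1,\dots,x_n)\widehat R$ by Proposition~\ref{cor:S}, so $S$ is invertible. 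The identity $(1+U)^{a}(1+U)^{b}=(1+U)^{a+b}$ holds coefficientwise as a polynomial identity in $a,b$ by Vandermonde's convolution; taking $a=p$, $b=-p$ confirms that $S^{-p}$ in the sense of \eqref{eq:formal-power} is the inverse of $S^p$. Hence $S^{-p}[e^{-\mu}]$ is the value at $q=-p$ of the polynomial $S^q[e^{-\mu}]$.

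Next I would apply \eqref{eq:chordal-negative-pi} with $q$ replaced by $p$:
\[
\pi_{\G_{\im}}^{\mvec}(-p)=(-1)^{|\mvec|}\prod_{r=1}^n\binom{p-1+a_r(\mvec)}{m_r}.
\]
Combined with \eqref{eq:coefficient} at $q=-p$, the two factors $(-1)^{|\mvec|}$ cancel, which gives \eqref{eq:chordal-inverse}.

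The only point needing care, and the main obstacle, is justifying that evaluation at a negative integer commutes with taking coefficients. This is handled by the finiteness remark following \eqref{eq:formal-power}, together with the convolution argument above. The remaining steps are direct substitution.
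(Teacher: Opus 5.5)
Your proposal is correct and follows the paper's route: substitute \eqref{eq:chordal-pi} into \eqref{eq:coefficient}, then specialize at $q=-p$ using \eqref{eq:chordal-negative-pi}, so that the two signs $(-1)^{|\mvec|}$ cancel. You also justify, via the Vandermonde convolution, that evaluating the coefficient polynomials at $q=-p$ gives the coefficients of the genuine inverse $S^{-p}$; the paper leaves this step implicit, and your argument for it is sound.
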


\subsection{Connected support and the linear coefficient}
We record an elementary observation about the chosen ordering.
\begin{lemma}\label{lem:starts}
Let $\G$ be chordal with the chosen perfect elimination ordering,
and let $J=\supp(\mvec)\neq\varnothing$. Call $r\in J$ a start if it
has no earlier neighbor in $J$. The number of starts equals the
number of components of $\G(J)$. In particular, if $\G(J)$ is
connected and $r_0=\min J$, then
\[
a_{r_0}(\mvec)=m_{r_0},\qquad
a_r(\mvec)-m_r>0\quad(r\in J\setminus\{r_0\}).
\]
\end{lemma}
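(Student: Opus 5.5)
The plan is to show that the map sending each component of $\G(J)$ to its earliest vertex (in the chosen ordering) is a bijection from components onto starts. Then the two displayed facts follow by unpacking the definition \eqref{eq:a-r}.

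\emph{Each component has exactly one start, namely its minimal vertex.} Fix a component $C$ of $\G(J)$ and let $r_C=\min C$. Any neighbor of $r_C$ in $J$ lies in $C$, hence is $>r_C$, so $r_C$ is a start. Conversely, suppose $r\in C$ with $r>r_C$. Choose a path in $\G(J)$ from $r$ to a vertex earlier than $r$; among all paths in $\G(J)$ from $r$ to $r_C$, take one $r=w_0,w_1,\dots,w_k=r_C$ minimizing the maximal vertex. Let $w_i$ be the maximal vertex on it. If $0<i<k$, the earlier neighbors $w_{i-1},w_{i+1}$ of $w_i$ lie in the clique $N_\G[w_i]\cap\{1,\dots,w_i-1\}$. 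They are therefore adjacent, and removing $w_i$ gives a path with smaller maximum. (If $w_{i-1}=w_{i+1}$, simply shortcut.) This contradicts minimality. Hence the maximum is attained at an endpoint, so it is $w_0=r$. Then $w_1<r$ is an earlier neighbor of $r$ in $J$, and $r$ is not a start. So starts are in bijection with components.

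\emph{The connected case.} If $\G(J)$ is connected, then $r_0=\min J$ is the unique start. The earlier neighbors $s$ of $r_0$ have $s\notin J$ (there is none in $J$, and $s<r_0$ anyway), so $m_s=0$ and $a_{r_0}(\mvec)=m_{r_0}$. For $r\in J\setminus\{r_0\}$, $r$ is not a start, so it has an earlier neighbor $s\in J$ with $m_s>0$. Since all terms in $a_r(\mvec)-m_r=\sum_{s<r,\,e(s,r)\in E}m_s$ are nonnegative, this sum is positive.

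The main obstacle is the path-shortening step, which is where the perfect elimination property is used. The argument should choose the path minimizing the maximal vertex (or, equivalently, induct on the maximum). Since the path stays inside $J$ and the clique edge joins two vertices of $J$, the shortcut path remains in $\G(J)$.
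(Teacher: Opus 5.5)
Your proof is correct and follows essentially the same route as the paper. Both arguments take a path inside a component from a non-minimal vertex to the component's minimum, and use the perfect elimination property at the path's largest vertex to shortcut the path. The only difference is the extremal choice: you minimize the largest vertex on the path and argue directly, while the paper takes a shortest path between two assumed starts and derives a chord.
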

\begin{proof}
Let $C$ be a connected component of $\G(J)$. Since $C$ is finite and non-empty, it contains a unique minimum vertex $v^*$ with respect to the perfect elimination ordering. Because $v^*$ is the minimum in $C$, it has no earlier neighbors within $C$. Furthermore, since $C$ is a connected component, $v^*$ has no neighbors in $J \setminus C$, making $v^*$ is a start vertex. This shows every connected component contains at least one start vertex.

Now assume $C$ contains another start vertex $u^* \neq v^*$. Because $C$ is connected, there exists a shortest path $P$ between $u^*$ and $v^*$ in $C$, and $P$ can not be an edge. Let $w$ be the highest-ordered vertex on this path. We first note that $w$ cannot be $v^*$ because $v^*$ is the strictly minimum vertex in $C$. Furthermore, $w$ cannot be $u^*$: if it were, $u^*$'s neighbor on the path would precede $u^*$ in the ordering, contradicting the assumption that $u^*$ is a start vertex. Thus, $w$ is an internal vertex of $P$, and its two immediate neighbors on $P$-say, $x$ and $y$-must both precede $w$ in the ordering ($x < w$ and $y < w$).
\begin{center}
\begin{tikzpicture}[
  vertex/.style={circle, fill=black, inner sep=2pt, minimum size=4pt},
  every label/.style={font=\small},
  thick
]
  \node[vertex, label=below:{$u^*$ }] (u) at (0,0) {};
  \node[vertex, label=below:{$x$}] (x) at (2,0.5) {};
  \node[vertex, label=above:{$w$ }] (w) at (3.5,1) {};
  \node[vertex, label=below:{$y$}] (y) at (5,0.5) {};
  \node[vertex, label=below:{$v^*$ }] (v) at (7,0) {};

  \draw (u) -- node[below] {$\dots$} (x);
  \draw (x) -- (w) -- (y);
  \draw (y) -- node[below] {$\dots$} (v);

  \draw[dashed, blue] (x) -- node[below] {Chord} (y);
\end{tikzpicture}
\end{center}
\noindent By the defining property of a perfect elimination ordering, the earlier neighbours of any vertex form a clique. Therefore, $x$ and $y$ must be adjacent in $\G(J)$. However, this creates a chord in $P$, contradicting the assumption that $P$ is a shortest path. Thus, every connected component of $\G(J)$ contains exactly one start vertex. Consequently, the number of starts equals the number of components of $\G(J)$.

If $\G(J)$ is connected and $r_0 = \min J$. Then, $r_0$ is the unique start vertex in $J$. Thus, every other vertex $r \in J \setminus \{r_0\}$ must have an earlier neighbour in $J$. This forces the strict inequality $a_r(\mvec) - m_r > 0$ for all $r \neq r_0$, while the unique start $r_0$ retains $a_{r_0}(\mvec) = m_{r_0}$.
\end{proof}

Now Theorem \ref{thm:chordal-product}, gives the following immediate consequence for the chordal graph.

\begin{corollary}\label{cor:chordal-kappa}
Suppose $\G$ is chordal with the chosen perfect elimination ordering.
If $\mvec\neq\bzero$ and $J=\supp(\mvec)$ is connected in $\G$, let
$r_0=\min J$. Then
\begin{equation}\label{eq:chordal-kappa}
{
\kappa_{\G}(\mvec)
=\frac1{m_{r_0}}
\prod_{r\in J\setminus\{r_0\}}
\binom{a_r(\mvec)-1}{m_r}.
}
\end{equation}
For disconnected $J$, $\kappa_{\G}(\mvec)=0$.
\end{corollary}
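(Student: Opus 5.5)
We read off the linear coefficient in $q$ of the product formula \eqref{eq:chordal-pi} of Theorem~\ref{thm:chordal-product}. Vertices $r\notin J$ have $m_r=0$ and contribute the factor $\binom{q-a_r+0}{0}=1$, so only $r\in J$ matter. By Lemma~\ref{lem:starts}, when $\G(J)$ is connected we have $a_{r_0}(\mvec)=m_{r_0}$. The factor at $r_0$ is therefore
\[
\binom{q}{m_{r_0}}=\frac{q(q-1)\cdots(q-m_{r_0}+1)}{m_{r_0}!},
\]
which vanishes at $q=0$. Its linear coefficient is
\[
\frac{(-1)^{m_{r_0}-1}(m_{r_0}-1)!}{m_{r_0}!}=\frac{(-1)^{m_{r_0}-1}}{m_{r_0}}.
\]

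Since this factor is divisible by $q$, the linear coefficient of the whole product equals this number times the constant term of the remaining product $\prod_{r\in J\setminus\{r_0\}}\binom{q-a_r(\mvec)+m_r}{m_r}$. For each such $r$, Lemma~\ref{lem:starts} gives $b_r:=a_r(\mvec)-m_r>0$. We evaluate at $q=0$ using the standard negation identity $\binom{-b}{m}=(-1)^m\binom{b+m-1}{m}$:
\[
\binom{-b_r}{m_r}=(-1)^{m_r}\binom{b_r+m_r-1}{m_r}=(-1)^{m_r}\binom{a_r(\mvec)-1}{m_r}.
\]
This is the only point where positivity of $b_r$ enters. It is not needed for the formula itself, but it ensures the binomial $\binom{a_r-1}{m_r}$ has a nonnegative top entry.

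Collecting signs, the total sign is $(-1)^{m_{r_0}-1}\prod_{r\ne r_0}(-1)^{m_r}=(-1)^{|\mvec|-1}$. Multiplying by $(-1)^{|\mvec|-1}$ as in the definition \eqref{eq:kappa} of $\kappa_{\G}$ cancels this sign and yields \eqref{eq:chordal-kappa}.

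For disconnected $J$, the vanishing follows from Proposition~\ref{prop:kappa-general}. Alternatively, it follows directly from Lemma~\ref{lem:starts}: each start $r$ has $a_r=m_r$, so it contributes a factor $\binom{q}{m_r}$ divisible by $q$. Two or more starts then give $q^2\mid\pi_{\G}^{\mvec}(q)$, so the linear coefficient is zero.

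The only step needing care is the sign bookkeeping together with the identification of the unique start $r_0=\min J$. The latter is supplied by Lemma~\ref{lem:starts}, so no real obstacle remains.
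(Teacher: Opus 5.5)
Your proposal is correct and follows the paper's proof essentially step for step. You isolate the factor $\binom{q}{m_{r_0}}$ (using $a_{r_0}=m_{r_0}$), take its linear coefficient $(-1)^{m_{r_0}-1}/m_{r_0}$, evaluate the remaining factors at $q=0$ via $\binom{-x}{k}=(-1)^k\binom{x+k-1}{k}$, and cancel the sign $(-1)^{|\mathbf m|-1}$; the disconnected case via an extra factor of $q$ from each additional start also matches the paper's argument.
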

\begin{proof}
From \eqref{eq:chordal-pi}, we have expression of $\pi_{\mvec}^G(q)$,
\[
\pi_{\mvec}^G(q)
=
\binom{q-a_{r_{0}}(\mvec)+m_{r_{0}}}{m_{r_{0}}} \times \cdots \times \binom{q-a_{r_{k-1}}(\mvec) + m_{r_{k-1}}}{m_{r_{k-1}}} 
\]
\noindent Since $a_{r_{0}}(\mvec)=m_{r_{0}}$, the first term is $\binom{q}{m_{r_{0}}}$. For the remaining terms, $a_{r_{i}}(\mvec) - m_{r_{i}}>0$, which means evaluating $\binom{q-(a_{r_{i}}(\mvec) - m_{r_{i}})}{m_{r_{i}}}$ at $q=0$ yields a non-zero constant. We rewrite the first term as $\binom{q}{m_{r_{0}}}=\frac{q}{m_{r_{0}}}\binom{q-1}{m_{r_{0}}-1}$.
So $q$ divides $\pi_{\mvec}^{\G}(q)$ exactly once. Thus, we can write $\pi_{\mvec}^{\G}(q)=a_{1}q+a_{2}q^{2}+\cdots+ a_{n}q^{n}$. By factoring out $q$, we see that $a_{1}$ is the constant term of $(a_{1}+a_{2}q+\cdots+ a_{n}q^{n-1})$. Therefore, to find the linear coefficient $\pi_{\mvec}^{\G}(q)[q] = a_1$, we factor out $q$ and evaluate the remaining expression at $q=0$:
\begin{align*}
[q]\pi_{\mvec}^{\G}(q) 
&= \left. \frac{1}{m_{r_{0}}}\binom{q-1}{m_{r_{0}}-1} \times \cdots \times \binom{q-a_{r_{k-1}}(\mvec) + m_{r_{k-1}}}{m_{r_{k-1}}} \right|_{q=0}\\
&= \frac{1}{m_{r_{0}}}\binom{-1}{m_{r_{0}}-1}\times \cdots \times \binom{-(a_{r_{k-1}}(\mvec)-m_{r_{k-1}})}{m_{r_{k-1}}}\\
&= \frac{1}{m_{r_{0}}} (-1)^{m_{r_0}-1} \binom{m_{r_0}-1}{m_{r_0}-1} \prod_{r_{i}\not=r_{0}} (-1)^{m_{r_i}} \binom{(a_{r_{i}}(\mvec)-m_{r_{i}})+m_{r_{i}}-1}{m_{r_{i}}}\\
&= \frac{1}{m_{r_{0}}} (-1)^{|\mvec|-1}\prod_{r_{i}\not=r_{0}} \binom{a_{r_{i}}(\mvec)-1}{m_{r_{i}}}
\end{align*}
Here we used the identity $\binom{-x}{k}=(-1)^{k}\binom{x+k-1}{k}$, and $(m_{r_0}-1) + \sum_{r_i \neq r_0} m_{r_i} = |\mvec|-1$.
\\
Finally, substituting this into the definition $\kappa_{\G}(\mvec)=(-1)^{|\mvec|-1}\pi_{\G}^{\mvec}(q)[q]$, yields exactly equation (\ref{eq:chordal-kappa}). If $J$ is disconnected, higher power of $q$ divides $\pi_{\mvec}^{\G}$, making $\pi_{\mvec}^{\G}[q]=0$, implying $\kappa_{\G}(\mvec)=0$.
\end{proof}
The order of vanishing of $\pi_{\G}^{\mvec}(q)$ at $q=0$ equals
the number of components of $\G(J)$: this is the standard
ordinary chromatic-polynomial property recalled in
\cite[p.~167]{Stanley}, applied to the clique
blow-up using \eqref{eq:blow-up}. In particular, if $\supp(\mvec)$ is disconnected then $\kappa_{\G}(\mvec)=0$. This fact holds for arbitrary $\G$.

\begin{corollary}[Chordal specialization]
\label{cor:chordal-root}
Suppose $\G_{\mathrm{im}}$ is chordal and $J=\supp(\mvec)\neq\varnothing$ is
connected. In the chosen perfect elimination ordering put $r_0=\min J$.
Then
\begin{equation}\label{eq:chordal-root}
{
\mult\bigl(\eta+\nu(\mvec)\bigr)
=\frac1{m_{r_0}}\sum_{d\mid g(\bk,\mvec)}\Mob(d)
M_\la\left(\frac\eta d;\frac{m_v}d\right)
\prod_{r\in J\setminus\{r_0\}}
\binom{a_r(\mvec)/d-1}{m_r/d}.
}
\end{equation}
For disconnected $J$, the multiplicity is zero.
\end{corollary}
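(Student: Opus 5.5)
The plan is to combine Theorem~\ref{cor:root-general} with Corollary~\ref{cor:chordal-kappa}, applied at each scaled multidegree $\mvec/d$. Theorem~\ref{cor:root-general} gives
\[
\mult\bigl(\eta+\nu(\mvec)\bigr)=\sum_{d\mid g(\bk,\mvec)}\frac{\Mob(d)}{d}\,
M_\la\left(\frac\eta d;\frac{m_v}d\right)\kappa_{\G_{\mathrm{im}}}\left(\frac\mvec d\right).
\]
So it suffices to evaluate $\kappa_{\G_{\mathrm{im}}}(\mvec/d)$ for each divisor $d$ of $g$.

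First I check that the hypotheses of Corollary~\ref{cor:chordal-kappa} hold for $\mvec/d$, with the same perfect elimination ordering.

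\begin{itemize}
\item Since $d\mid m_j$ for every $j$, the vector $\mvec/d$ has nonnegative integer entries.
\item Its support equals $J$, because $m_j/d>0$ exactly when $m_j>0$. So the support is connected and its minimum is still $r_0$.
\item The quantities $a_r$ are linear in $\mvec$ by \eqref{eq:a-r}, so $a_r(\mvec/d)=a_r(\mvec)/d$.
\end{itemize}

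Corollary~\ref{cor:chordal-kappa} then yields
\[
\kappa_{\G_{\mathrm{im}}}(\mvec/d)=\frac{d}{m_{r_0}}\prod_{r\in J\setminus\{r_0\}}\binom{a_r(\mvec)/d-1}{m_r/d}.
\]
Substituting this into the divisor sum, the factor $d$ cancels the $1/d$ in front. The common factor $1/m_{r_0}$ comes out of the sum, which gives exactly \eqref{eq:chordal-root}.

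For disconnected $J$, the vanishing is already the last statement of Theorem~\ref{cor:root-general}. It also follows directly, since each $\mvec/d$ has the same disconnected support and so every $\kappa_{\G_{\mathrm{im}}}(\mvec/d)$ is zero by Corollary~\ref{cor:chordal-kappa}.

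The argument is bookkeeping. The one point to watch is that the support, and hence the start vertex $r_0$ from Lemma~\ref{lem:starts}, does not change when $\mvec$ is divided by $d$; this is what allows the factor $1/m_{r_0}$ to be pulled out of the sum uniformly.
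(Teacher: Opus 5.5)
Your proposal is correct and follows the paper's own proof. Like the paper, you insert the chordal formula for $\kappa_{\G_{\mathrm{im}}}(\mvec/d)$ from Corollary~\ref{cor:chordal-kappa} into the M\"obius-inversion formula of Theorem~\ref{cor:root-general}, and you use the scaling $a_r(\mvec/d)=a_r(\mvec)/d$ together with the cancellation $d^{-1}(m_{r_0}/d)^{-1}=m_{r_0}^{-1}$. You also check explicitly that $\supp(\mvec/d)=J$, so that $r_0$ and the connectedness hypothesis are unchanged; the paper leaves this point implicit.
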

\begin{proof}
This follows by inserting \eqref{eq:chordal-kappa} into
Corollary~\ref{cor:root-general}. The scaling identities
\[
a_r(\mvec/d)=a_r(\mvec)/d,\qquad
d^{-1}(m_{r_0}/d)^{-1}=m_{r_0}^{-1}
\]
give the factor $1/m_{r_0}$.
\end{proof}
\begin{example}\label{ex:diamond}
Let $\G_{\mathrm{im}}$ have vertices $1,2,3,4$ and edges
\[
e(1,2),\ e(1,3),\ e(2,3),\ e(2,4),\ e(3,4).
\]
\begin{center}
\begin{tikzpicture}[
    thick,
    vertex/.style={circle, fill=black, inner sep=1.8pt}
]
  \node[vertex, label=left:$1$]  (v1) at (0,0) {};
  \node[vertex, label=above:$2$] (v2) at (1.5,1) {};
  \node[vertex, label=below:$3$] (v3) at (1.5,-1) {};
  \node[vertex, label=right:$4$] (v4) at (3,0) {};

  \draw (v1) -- (v2);
  \draw (v1) -- (v3);
  \draw (v2) -- (v3);
  \draw (v2) -- (v4);
  \draw (v3) -- (v4);
\end{tikzpicture}
\end{center}
The earlier neighbor sets are $\varnothing$, $\{1\}$, $\{1,2\}$,
and $\{2,3\}$; each is a clique. Thus this ordering is a perfect
elimination ordering, and
\[
a_1=m_1,\quad a_2=m_1+m_2,\quad
a_3=m_1+m_2+m_3,\quad a_4=m_2+m_3+m_4.
\]
Formula \eqref{eq:chordal-pi} becomes
\[
\pi_{\G_{\mathrm{im}}}^{\mvec}(q)
=\binom q{m_1}\binom{q-m_1}{m_2}
\binom{q-m_1-m_2}{m_3}\binom{q-m_2-m_3}{m_4}.
\]
For $\mvec=(1,1,1,1)$ it is $q(q-1)(q-2)^2$, whose linear
coefficient is $-4$. Hence $\kappa_{\G_{\mathrm{im}}}(\mvec)=4$.
The gcd of all coefficients of $\eta+\be_1+\cdots+\be_4$ is $1$,
and $m_v=1$ for every attachment vertex $v$. Therefore by Theorem \ref{cor:root-general},
\[
\mult(\eta+\be_1+\be_2+\be_3+\be_4)=4M_\la(\eta,1).
\]
For $\mvec=(2,1,1,2)$, the values of $a_r$ are $(2,3,4,4)$, so
\[
\kappa_{\G_{\mathrm{im}}}(2,1,1,2)
=\frac12\binom21\binom31\binom32=9.
\]
This vector again has gcd $1$, so for attachment at $v=1$ the root
multiplicity is $9M_\la(\eta;2)$. 
\end{example}

\subsection{Complete graphs}
\begin{corollary}\label{cor:complete}
For $\G_{\mathrm{im}}=K_n$ and $\mvec\geq0$,
\[
\pi_{K_n}^{\mvec}(q)=\frac{(q)_{|\mvec|}}{\mvec!}.
\]
If $\mvec\neq\bzero$, then
\[
\kappa_{K_n}(\mvec)=\frac{(|\mvec|-1)!}{\mvec!},
\qquad
\mult(\eta+\nu(\mvec))
=\sum_{d\mid g}\frac{\Mob(d)}d
M_\la\left(\frac\eta d;\frac{m_v}d\right)
\frac{(|\mvec|/d-1)!}{\prod_{j=1}^n(m_j/d)!}.
\]
\end{corollary}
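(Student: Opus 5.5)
The plan is to treat $K_n$ as a chordal graph and read everything off the chordal results already established. For $K_n$, any ordering $1,\ldots,n$ is a perfect elimination ordering, since every set of earlier neighbours is a subset of a complete graph and hence a clique. For this ordering the earlier closed neighbourhood of $r$ is $\{1,\ldots,r\}$, so
\[
a_r(\mvec)=m_1+\cdots+m_r.
\]
Substituting into \eqref{eq:chordal-pi} of Theorem~\ref{thm:chordal-product} gives
\[
\pi_{K_n}^{\mvec}(q)=\prod_{r=1}^n\binom{q-m_1-\cdots-m_{r-1}}{m_r}.
\]
Writing each binomial as a falling factorial divided by $m_r!$, the numerators telescope: the $r$-th factor is $(q-m_1-\cdots-m_{r-1})_{m_r}$, and the product of these is $(q)_{|\mvec|}$. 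This yields $\pi_{K_n}^{\mvec}(q)=(q)_{|\mvec|}/\mvec!$.

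As an independent check, Proposition~\ref{prop:blow-up} gives the same formula directly. The blow-up $K_n[\mvec]$ is the complete graph $K_{|\mvec|}$, whose chromatic polynomial is $(q)_{|\mvec|}$.

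For $\kappa$, take $\mvec\neq\bzero$ and extract the linear coefficient of $(q)_N$ with $N=|\mvec|$. Writing $(q)_N=q\,(q-1)\cdots(q-N+1)$, the linear coefficient is the value of $(q-1)\cdots(q-N+1)$ at $q=0$, namely $(-1)^{N-1}(N-1)!$. Multiplying by $(-1)^{N-1}$ as in \eqref{eq:kappa} gives
\[
\kappa_{K_n}(\mvec)=\frac{(|\mvec|-1)!}{\mvec!}.
\]
Corollary~\ref{cor:chordal-kappa} gives the same value: with $r_0=\min J$, the product telescopes to $(|\mvec|-1)!/\mvec!$. The support is automatically connected here, since $K_n$ is complete.

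The multiplicity formula then follows by substituting $\kappa_{K_n}(\mvec/d)=(|\mvec|/d-1)!/\prod_j(m_j/d)!$ into Theorem~\ref{cor:root-general}.

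The only step needing care is the case where some $m_r=0$, including $r=v$. The telescoping still holds because $\binom{x}{0}=1$, and the convention $M_\la(\eta;0)=\delta_{\eta,0}$ covers $m_v=0$. I expect no real obstacle beyond this bookkeeping.
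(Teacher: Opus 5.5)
Your proposal is correct and follows the paper's own argument. The paper also derives $\pi_{K_n}^{\mvec}(q)=(q)_{|\mvec|}/\mvec!$ from the chordal product formula \eqref{eq:chordal-pi}, and your telescoping with $a_r(\mvec)=m_1+\cdots+m_r$ is exactly the computation it leaves implicit; it then substitutes the resulting linear coefficient into Theorem~\ref{cor:root-general}, as you do, and your additional checks via the blow-up $K_n[\mvec]=K_{|\mvec|}$ and via Corollary~\ref{cor:chordal-kappa} are consistent and correct.
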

\begin{proof}
Immediate from the equation \eqref{eq:chordal-pi} and Theorem \ref{cor:root-general}.

\end{proof}

\subsection{Path graphs}
Let $\G_{\mathrm{im}} = L_n$ denote the path with edges $e(r,r+1)$ for $1\leq r<n$. We may attach the real graph with the path at
any $v\in\{1,\ldots,n\}$.
\begin{corollary}\label{cor:path-coefficients}
For every $\mvec\geq0$,
\begin{align}
\pi_{L_n}^{\mvec}(q)
&=\prod_{r=1}^n\binom{q-m_{r-1}}{m_r},
\label{eq:path-pi}\\
S^q[e^{-(\eta+\nu(\mvec))}]
&=(-1)^{|\mvec|}M_\la(\eta;m_v)
\prod_{r=1}^n\binom{q-m_{r-1}}{m_r},
\label{eq:path-coefficient}\\
S^{-p}[e^{-(\eta+\nu(\mvec))}]
&=M_\la(\eta;m_v)
\prod_{r=1}^n\binom{p-1+m_{r-1}+m_r}{m_r}\quad(p\geq1).
\label{eq:path-inverse}
\end{align}
Also
\[
\pi_{L_n}^{\mathrm{ch},\mvec}(q)
=e^{-\nu(\mvec)}H_\la^{m_v}
\prod_{r=1}^n\binom{q-m_{r-1}}{m_r}.
\]
\end{corollary}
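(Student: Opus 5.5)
The path $L_n$ is chordal, and the natural order $1,2,\ldots,n$ is a perfect elimination ordering: the earlier neighbourhood of $r$ is $\{r-1\}$ for $r\geq 2$ and empty for $r=1$. Each of these is a clique, so Theorem~\ref{thm:chordal-peo} is not even needed. All four identities should then follow by specializing the chordal results of Section~\ref{sec:chordal} and Corollary~\ref{cor:coefficient}.

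Under this ordering, \eqref{eq:a-r} gives $a_r(\mvec)=m_r+m_{r-1}$, with the convention $m_0=0$, since the only possible earlier neighbour of $r$ is $r-1$. Hence $a_r(\mvec)-m_r=m_{r-1}$. Substituting into \eqref{eq:chordal-pi} gives \eqref{eq:path-pi}. Plugging \eqref{eq:path-pi} into \eqref{eq:coefficient}, or equivalently into \eqref{eq:chordal-coefficient}, gives \eqref{eq:path-coefficient}. For \eqref{eq:path-inverse}, insert $a_r=m_{r-1}+m_r$ into \eqref{eq:chordal-inverse}.

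Justifying \eqref{eq:chordal-inverse} is the only point needing care, namely the passage from the indeterminate $q$ to $q=-p$. I will argue it as follows. For each fixed monomial, the coefficient of $S^q$ is a polynomial in $q$, by \eqref{eq:formal-power}. Since $S-1$ has zero constant term, the formal binomial series at $q=-p$ agrees with the genuine inverse power $S^{-p}$ in $\widehat R$. One can then evaluate \eqref{eq:chordal-negative-pi} at $q=p$, which gives $\pi^{\mvec}(-p)=(-1)^{|\mvec|}\prod_r\binom{p-1+a_r}{m_r}$. The sign $(-1)^{|\mvec|}$ cancels the one in \eqref{eq:coefficient}.

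The character statement is \eqref{eq:chromatic-character} with \eqref{eq:path-pi} substituted. I do not expect a genuine obstacle; the only thing to check is the $m_0=0$ convention, so that the $r=1$ factor is $\binom{q}{m_1}$.
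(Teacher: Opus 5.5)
Your proposal is correct and follows essentially the same route as the paper: specialize the chordal results to the natural ordering of $L_n$, where $a_r(\mvec)=m_{r-1}+m_r$, and substitute into \eqref{eq:coefficient}, \eqref{eq:chordal-coefficient}, \eqref{eq:chordal-inverse} and \eqref{eq:chromatic-character}. The only differences are cosmetic: the paper cites \cite[Section~2.7, Example~(1)]{BHV} for \eqref{eq:path-pi} instead of deriving it from \eqref{eq:chordal-pi}, and it leaves implicit the evaluation at $q=-p$ that you justify explicitly.
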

\begin{proof}
The polynomial $\pi^{\mathbf m}_{L_n}$ is obtained in \cite[Section~2.7, Example~(1)]{BHV}.
In the natural ordering, $a_r(\mvec)=m_{r-1}+m_r$. Substitution of $\pi^{\mathbf m}_{L_n}$ and $a_r(\mvec)$ 
in \eqref{eq:coefficient}, \eqref{eq:chordal-coefficient}, \eqref{eq:chordal-inverse}, and
\eqref{eq:chromatic-character} gives the remaining formulas.
\end{proof}

\begin{corollary}\label{cor:path-roots}
Suppose $\mvec\neq\bzero$. If $\supp(\mvec)$ is not an interval, then
$\kappa_{L_n}(\mvec)=0$ and $\mult(\eta+\nu(\mvec))=0$.
If $\supp(\mvec)=[r,s]$, then
\begin{align}
\kappa_{L_n}(\mvec)
&=\frac1{m_r}\prod_{j=r+1}^s
\binom{m_{j-1}+m_j-1}{m_j},\label{eq:path-kappa}\\
\mult(\eta+\nu(\mvec))
&=\frac1{m_r}\sum_{d\mid g(\bk,\mvec)}\Mob(d)
M_\la\left(\frac\eta d;\frac{m_v}d\right)
\prod_{j=r+1}^s
\binom{(m_{j-1}+m_j)/d-1}{m_j/d}.
\label{eq:path-root}
\end{align}
\end{corollary}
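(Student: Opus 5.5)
The plan is to specialize the chordal results to the path $L_n$ with its natural vertex ordering $1<2<\cdots<n$. In this ordering each vertex $r$ has at most one earlier neighbour, namely $r-1$, so the earlier neighbourhoods are trivially cliques and the ordering is a perfect elimination ordering. As already noted in the proof of Corollary~\ref{cor:path-coefficients}, this gives $a_r(\mvec)=m_{r-1}+m_r$ with $m_0=0$.

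The first step is to identify the connected induced subgraphs of $L_n$. An induced subgraph $L_n(J)$ is connected exactly when $J$ is an interval $[r,s]$. If $J$ is not an interval, then $L_n(J)$ is disconnected, and the last assertion of Proposition~\ref{prop:kappa-general} gives $\kappa_{L_n}(\mvec)=0$. The same vanishing holds for every scaled vector $\mvec/d$, because $\supp(\mvec/d)=\supp(\mvec)$. Each term of the divisor sum in Theorem~\ref{cor:root-general} therefore vanishes, so $\mult(\eta+\nu(\mvec))=0$; this is also the final clause of that theorem.

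For the interval case $J=[r,s]$, apply Corollary~\ref{cor:chordal-kappa}, which gives $r_0=\min J=r$. For $j\in J\setminus\{r\}=\{r+1,\dots,s\}$ we have $a_j(\mvec)-1=m_{j-1}+m_j-1$, and substituting this yields \eqref{eq:path-kappa}. Since $m_{j-1}>0$ for these $j$, the hypothesis $a_j-m_j>0$ of Lemma~\ref{lem:starts} holds automatically. Formula \eqref{eq:path-root} then follows by inserting the same values of $a_j$ into Corollary~\ref{cor:chordal-root}, using $a_j(\mvec/d)=a_j(\mvec)/d$.

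The only point that needs care is the scaling. Every $\mvec/d$ with $d\mid g$ has the same support $[r,s]$ and hence the same minimal vertex $r$. Consequently the prefactor $d^{-1}(m_r/d)^{-1}=m_r^{-1}$ can be pulled out of the sum uniformly, exactly as in the proof of Corollary~\ref{cor:chordal-root}. Apart from this, the argument is a direct substitution, and I do not expect any real obstacle.
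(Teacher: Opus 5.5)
Your proposal is correct and follows the paper's own proof. Both arguments specialize Corollaries~\ref{cor:chordal-kappa} and~\ref{cor:chordal-root} to the natural perfect elimination ordering of $L_n$, where $a_j(\mvec)=m_{j-1}+m_j$ and $r$ is the unique start of $[r,s]$, and both obtain vanishing from disconnectedness when the support is not an interval. Your explicit remark that $\supp(\mvec/d)=\supp(\mvec)$ is a small piece of extra care the paper leaves implicit: it ensures that every term of the divisor sum vanishes in the non-interval case, and that the prefactor $m_r^{-1}$ is the same for every $d$.
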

\begin{proof}
 If $\supp(\mvec)$ is not an interval then $\supp(\mvec)$ is disconnected and hence it follows from Corollary \ref{cor:chordal-kappa} that $\kappa_{L_n}(\mvec)=0$, as every path graph is chordal. Again if $\supp(\mvec)$ is an interval, then the subgraph generated by $\supp(\mvec)$ has a start at $r.$ Hence by Corollary \ref{cor:chordal-kappa} and Corollary \ref{cor:chordal-root} we have the result, since $a_r(\mvec)=m_{r-1}+m_r.$
 
\end{proof}
\subsection{More examples}
The next corollary deals with the root multiplicities of three different kinds of graphs but with special situations.
\begin{corollary}
\label{cor:path-special}
\begin{enumerate}
    \item If there is only one imaginary vertex $\be_1$ attached to $\mathcal{G}_{\mathrm{re}}$, then for $t>0$,
\begin{equation}\label{eq:single-vertex}
\mult(\eta+t\be_1)
=\frac1t\sum_{d\mid\gcd(\{k_i\}_{i \in I^{\re}},t)}\Mob(d)
M_\la(\eta/d;t/d).
\end{equation}
\item Let the graph $\G_{\mathrm{im}}$ is a path graph but the interval $[r,s]$ is squarefree, meaning all $m_i=1$ for all $i \in [r,s].$ Then 
\[
\mult\left(\eta+\sum_{j=r}^s\be_{j}\right)
=\begin{cases}
M_\la(\eta;1),&v\in[r,s],\\
\delta_{\eta,0},&v\notin[r,s].
\end{cases}
\] 
\item For an arbitrary $\G_{\mathrm{im}}$ with any imaginary vertex $j$, 
$\mult(t\be_{j})=1$ if $t=1$ and $0$ if $t>1$.
\end{enumerate}

\end{corollary}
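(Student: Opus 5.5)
All three parts are specializations of Theorem~\ref{cor:root-general}: in each case we compute $\kappa_{\G_{\mathrm{im}}}(\mvec/d)$ explicitly and then evaluate the divisor sum.

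For part (1), $\G_{\mathrm{im}}$ is the single vertex $1=v$, which is trivially chordal. Corollary~\ref{cor:complete} with $n=1$ (or Corollary~\ref{cor:chordal-kappa} with $J=\{1\}$) gives $\kappa(t)=1/t$. Replacing $t$ by $t/d$ gives $\kappa(t/d)=d/t$. Substituting into \eqref{eq:root-general}, the factor $\frac{1}{d}\cdot\frac{d}{t}=\frac1t$ leaves exactly \eqref{eq:single-vertex}, with $g=\gcd(\{k_i\},t)$ and $m_v=t$.

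For part (2), take $\mvec$ with $m_j=1$ for $j\in[r,s]$ and $m_j=0$ otherwise. Then $g=1$, so only $d=1$ contributes. By \eqref{eq:path-kappa}, each factor is $\binom{1+1-1}{1}=1$ and $m_r=1$, so $\kappa_{L_n}(\mvec)=1$. Theorem~\ref{cor:root-general} therefore gives $\mult=M_\la(\eta;m_v)$.
\begin{itemize}
\item If $v\in[r,s]$, then $m_v=1$ and the multiplicity is $M_\la(\eta;1)$.
\item If $v\notin[r,s]$, then $m_v=0$ and $M_\la(\eta;0)=\delta_{\eta,0}$ by the convention after \eqref{eq:M}.
\end{itemize}
One caveat: the statement writes the gcd as $1$ regardless of $\mathbf k$, and this holds because some $m_j=1$. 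The case $v\notin[r,s]$ is also consistent with the corollary following Theorem~\ref{cor:root-general}.

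For part (3), take $\mvec=t e_j$ and $\eta=0$. Then $g=t$ and $\supp(\mvec)=\{j\}$. The restriction of $\G_{\mathrm{im}}$ to a single vertex has $\pi^{(s)}(q)=\binom qs$, so $\kappa(s)=1/s$; this follows from Proposition~\ref{prop:components} applied to the support, or directly from \eqref{eq:kappa-tuples}. We also have $M_\la(0;\cdot)=1$. Hence
\[
\mult(t\be_j)=\sum_{d\mid t}\frac{\Mob(d)}{d}\cdot\frac{d}{t}=\frac1t\sum_{d\mid t}\Mob(d)=\delta_{t,1}.
\]
The only point needing care is that $\kappa$ depends only on the induced subgraph on the support, so adjacency of $j$ to other vertices is irrelevant. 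This is exactly the factorization \eqref{eq:component-factorization}.

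No step is a real obstacle. The main things to check are that $M_\la(0;t)=1$ and $M_\la(\eta;0)=\delta_{\eta,0}$ are applied correctly, and that the $1/d$ scaling cancels as claimed.
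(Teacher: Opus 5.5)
Your proof is correct and is essentially the paper's argument: for parts (1) and (2) the paper reads the result directly off \eqref{eq:path-root}, where the product is empty for a single vertex and every factor equals $1$ for a squarefree interval, and this is exactly your computation of $\kappa$ followed by the $1/d$ cancellation in Theorem~\ref{cor:root-general}. For part (3) the paper obtains $\pi_{\G_{\mathrm{im}}}^{t\mathbf e_j}(q)=\binom qt$ from the blow-up $\G_{\mathrm{im}}[t\mathbf e_j]=K_t$ rather than from the support factorization \eqref{eq:component-factorization} or \eqref{eq:kappa-tuples}, which is an immaterial difference.
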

\begin{proof}
The first two assertions are direct evaluations of
\eqref{eq:path-root}: the product is empty for a single vertex
and is $1$ for a squarefree interval.
For an arbitrary $\G_{\im}$, let $\mathbf e_j$ be the $j$th unit vector.
The blow-up $\G_{\im}[t\mathbf e_j]$ is $K_t$, so by Theorem \ref{cor:complete},
$\pi_{\G_{\im}}^{t\mathbf e_j}(q)=\binom qt$ and
$\kappa_{\G_{\im}}(t\mathbf e_j)=1/t$.
Since $M_\la(0;r)=1$, for all $r \geq 0$, Theorem~\ref{cor:root-general} gives
\[
\mult(t\be_{j})=\frac1t\sum_{d\mid t}\Mob(d),
\]
which is $1$ for $t=1$ and $0$ for $t>1$.

\end{proof}

\section{Finite formulas for the real character factor}\label{sec:real-factor}

\subsection{A weight-multinomial expression}
For $\eta\in Q_{\mathrm{re}}^+$ define
\[
\Gamma_\eta=\{\ga\in Q_{\mathrm{re}}^+:\eta-\ga\in Q_{\mathrm{re}}^+,
\ h_\la(\ga)\neq0\}.
\]
If $\eta=\dis\sum_{i \in I^{\re}} k_i\al_i$, every simple-root coefficient of a
member of $\Gamma_\eta$ lies between $0$ and $k_i$, so this set is
finite and it contains $0$.
\begin{proposition}\label{prop:weight-multinomial}
For $\eta\in Q_{\mathrm{re}}^+$ and $t\in\Z_{\geq0}$,
\begin{equation}\label{eq:weight-multinomial}
M_\la(\eta;t)
=\sum_{\substack{n_\ga\geq0\ (\ga\in\Gamma_\eta)\\
\sum_\ga n_\ga=t,\quad \sum_\ga n_\ga\ga=\eta}}
\frac{t!}{\prod_\ga n_\ga!}
\prod_\ga h_\la(\ga)^{n_\ga}.
\end{equation}
\end{proposition}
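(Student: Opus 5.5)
The identity follows from $M_\la(\eta;t)=H_\la^t[e^{-\eta}]$, established right after \eqref{eq:M}, together with the multinomial theorem applied to a truncation of $H_\la$. Since $H_\la$ may be an infinite formal series when $\mf g_{\mathrm{re}}$ is not of finite type, I first reduce to a finite sum. Fix $\eta=\sum_i k_i\al_i$. Consider the product of $t$ factors $H_\la$, with $H_\la=\sum_{\ga\in Q^+_{\mathrm{re}}}h_\la(\ga)e^{-\ga}$. A monomial $e^{-\ga_1}\cdots e^{-\ga_t}$ contributes to the coefficient of $e^{-\eta}$ only if $\ga_1+\cdots+\ga_t=\eta$. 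Every $\ga_r$ lies in $Q^+_{\mathrm{re}}$, so each $\ga_r$ satisfies $\eta-\ga_r\in Q^+_{\mathrm{re}}$. Terms with $h_\la(\ga_r)=0$ contribute nothing. Therefore only $\ga_r\in\Gamma_\eta$ matter, and $H_\la^t[e^{-\eta}]=T^t[e^{-\eta}]$, where $T=\sum_{\ga\in\Gamma_\eta}h_\la(\ga)e^{-\ga}$. This $T$ is a finite sum, because $\Gamma_\eta$ is finite, as noted before the statement.

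Next I expand $T^t$ by the multinomial theorem over the finite index set $\Gamma_\eta$:
\[
T^t=\sum_{\substack{n_\ga\geq0\\ \sum_\ga n_\ga=t}}\frac{t!}{\prod_\ga n_\ga!}\prod_{\ga}h_\la(\ga)^{n_\ga}\,e^{-\sum_\ga n_\ga\ga}.
\]
Extracting the coefficient of $e^{-\eta}$ imposes $\sum_\ga n_\ga\ga=\eta$, which gives \eqref{eq:weight-multinomial}. Distinct elements $\ga\in\Gamma_\eta$ give distinct monomials $e^{-\ga}$, so grouping the terms by the multiplicities $n_\ga$ is legitimate.

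For $t=0$ the formula gives $\delta_{\eta,0}$, since the only tuple is $n\equiv0$, and this needs $\eta=0$. This agrees with $M_\la(\eta;0)=\delta_{\eta,0}$.

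I expect no real obstacle. The only delicate point is justifying the truncation from the formal series $H_\la$ to the finite polynomial $T$, and that follows from the positivity argument above: all exponents lie in $Q^+_{\mathrm{re}}$, so no cancellation or infinite contributions can occur in $\widehat R$.
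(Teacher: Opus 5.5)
Your proposal is correct and follows essentially the same route as the paper. Both write $M_\la(\eta;t)=H_\la^t[e^{-\eta}]$, truncate $H_\la$ to the finite sum over $\Gamma_\eta$, and apply the multinomial theorem. You spell out the truncation step (each $\ga_r$ in a contributing $t$-fold product satisfies $\eta-\ga_r=\sum_{s\neq r}\ga_s\in Q^+_{\mathrm{re}}$), which the paper only calls easy to observe.
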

\begin{proof}

Note that 
\begin{align*}
    M_\la(\eta;t)=H_\la^t[e^{-\eta}] =\left(\dis{\sum_{\ga \in Q_{\mathrm{re}}^+}}h_\la(\ga)e^{-\ga} \right)^t[e^{-\eta}] \cr
   \end{align*}
It is easy to observe that terms with $\gamma\notin\Gamma_\eta$ cannot contribute to the coefficient
of $e^{-\eta}$ in $H_\lambda^t$. Hence
\[
M_\lambda(\eta;t)=\left(
\sum_{\gamma\in\Gamma_\eta}
h_\lambda(\gamma)e^{-\gamma}
\right)^t[e^{-\eta}].
\]
Now applying the multinomial theorem, we get
\begin{align*}
 M_\la(\eta;t)&=
\sum_{\substack{
n_\gamma\geq 0\;(\gamma\in\Gamma_\eta)\\
\sum_{\gamma\in\Gamma_\eta}n_\gamma=t
}}
\frac{t!}
{\prod_{\gamma\in\Gamma_\eta}n_\gamma!}
\prod_{\gamma\in\Gamma_\eta}
\left(
h_\lambda(\gamma)e^{-\gamma}
\right)^{n_\gamma}[e^{-\eta}]
\\
&=
\sum_{\substack{
n_\gamma\geq 0\;(\gamma\in\Gamma_\eta)\\
\sum_{\gamma\in\Gamma_\eta}n_\gamma=t
}}
\frac{t!}
{\prod_{\gamma\in\Gamma_\eta}n_\gamma!}
\left(
\prod_{\gamma\in\Gamma_\eta}
h_\lambda(\gamma)^{n_\gamma}
\right)
e^{-\sum_{\gamma\in\Gamma_\eta}n_\gamma\gamma}[e^{-\eta}] \cr
&=\sum_{\substack{n_\ga\geq0\ (\ga\in\Gamma_\eta)\\
\sum_\ga n_\ga=t,\quad \sum_\ga n_\ga\ga=\eta}}
\frac{t!}{\prod_\ga n_\ga!}
\prod_\ga h_\la(\ga)^{n_\ga}.
\end{align*}

\end{proof}

\subsection{A finite expression using the Weyl group}
Choose a Weyl vector $\rho'$ satisfying $\rho'(h_{\alpha_i})=1$ for every
$i\in {I^{\mathrm{re}}}$, and set
\[
\delta_\la(w)=\la+\rho'-w(\la+\rho'),
\qquad \delta_0(w)=\rho'-w\rho'.
\]
The normalized Weyl--Kac formula \cite[Chapter~10]{Kac} is
\begin{equation}\label{eq:Weyl-ratio}
H_\la=\frac{N_\la}{N_0},\qquad
N_\la=\sum_{w\in W_{I^{\mathrm{re}}}}(-1)^{\ell(w)}e^{-\delta_\la(w)},
\qquad
N_0=\sum_{w\in W_{I^{\mathrm{re}}}}(-1)^{\ell(w)}e^{-\delta_0(w)}.
\end{equation}
Its denominator is also $D_{\mf g_{\mathrm{re}}}$.
\begin{lemma}\label{lem:height-bound}
For every $w\in W_{I^{\mathrm{re}}}$, both $\delta_\la(w)$ and $\delta_0(w)$ lie
in $Q_{\mathrm{re}}^+$, and
\[
\htop\delta_\la(w)\geq\ell(w),\qquad
\htop\delta_0(w)\geq\ell(w).
\]
\end{lemma}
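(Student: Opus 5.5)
The approach is induction on $\ell(w)$, based on the standard description of $\rho'-w\rho'$ as a sum of inversion roots; the case of $\delta_\la(w)$ is then reduced to that of $\delta_0(w)$.

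Write
\[
\delta_\la(w)=(\la-w\la)+\delta_0(w).
\]
For the first summand, $\la$ is dominant integral for $\mf g_{\mathrm{re}}$, so $\la-w\la\in Q_{\mathrm{re}}^+$ for every $w\in W_{I^{\mathrm{re}}}$. This is the standard fact proved by induction on $\ell(w)$. Write $w=s_iw'$ with $\ell(w)=\ell(w')+1$. Then
\[
\la-s_iw'\la=(\la-w'\la)+(w'\la)(h_{\al_i})\,\al_i .
\]
We have $(w'\la)(h_{\al_i})=\la(h_{w'^{-1}\al_i})\ge 0$, because $w'^{-1}\al_i$ is a positive real root when $\ell(s_iw')>\ell(w')$ (\cite[Lemma~3.11]{Kac}). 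It follows that $\htop\delta_\la(w)\geq\htop\delta_0(w)$, so it suffices to treat $\delta_0(w)$.

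For $\delta_0(w)$ I would use the same induction. With $w=s_iw'$ and $\ell(w)=\ell(w')+1$,
\[
\rho'-s_iw'\rho'=(\rho'-w'\rho')+(w'\rho')(h_{\al_i})\,\al_i .
\]
Here $(w'\rho')(h_{\al_i})=\rho'(h_{\be})$ with $\be=w'^{-1}\al_i$ a positive real root, and $h_\be$ its coroot. The coroot of a positive real root is a nonnegative integral combination of the simple coroots $h_{\al_j}$ with at least one nonzero coefficient, and $\rho'(h_{\al_j})=1$ for all $j$. Hence $\rho'(h_\be)$ is a positive integer, in fact the coroot height of $\be$. Therefore each step adds a positive multiple of a simple root, and the height increases by at least $1$.

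By induction, $\delta_0(w)\in Q_{\mathrm{re}}^+$ and $\htop\delta_0(w)\ge\ell(w)$, with base case $w=1$ where $\delta_0(1)=0$. Combined with the first step, $\delta_\la(w)\in Q^+_{\mathrm{re}}$ and $\htop\delta_\la(w)\ge\ell(w)$.

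The main point needing care is the positivity $\rho'(h_\be)\ge1$ for positive real roots $\be$ in the symmetrizable (possibly non-simply-laced) setting. One must use the coroot $h_\be=w'^{-1}h_{\al_i}$, which lies in the positive coroot lattice because $w'^{-1}\al_i>0$: the coroot system is the root system of $A^T$, and the Weyl group acts compatibly on both. The dependence on the choice of $\rho'$ does not matter, since only its values on the coroots $h_{\al_j}$ enter.

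An alternative I would keep in reserve is the formula $\rho'-w\rho'=\sum_{\be\in\Delta^+_{\mathrm{re}}\cap w\Delta^-_{\mathrm{re}}}\be$ (\cite[Lemma~3.11]{Kac}). This set of inversion roots has exactly $\ell(w)$ elements, each of height at least $1$, which gives the bound directly.
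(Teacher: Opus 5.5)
Your argument is correct, but it runs the telescoping in the dual direction from the paper. The paper fixes a reduced word $w=s_{i_1}\cdots s_{i_l}$ and writes, for $\theta=\rho'$ and $\theta=\rho'+\la$ simultaneously,
\[
\theta-w\theta=\sum_{a=1}^{l}\theta(h_{\al_{i_a}})\,s_{i_1}\cdots s_{i_{a-1}}(\al_{i_a}).
\]
Here the Weyl group acts on the roots. Each summand is a positive real root (an inversion root, as in your reserve alternative) weighted by $\theta(h_{\al_{i_a}})\ge 1$, and this bound is immediate from dominance.

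You instead peel off $s_i$ on the left and let the Weyl group act on the coroot. Each step then adds the simple root $\al_i$ with coefficient $\theta(h_\be)$, where $h_\be=w'^{-1}h_{\al_i}$. Your version has the advantage that the expansion in simple roots is explicit, so membership in $Q^+_{\mathrm{re}}$ can be read off coefficient by coefficient. The cost is an extra input: $h_\be$ must be a nonzero nonnegative integral combination of simple coroots. That is, you need Lemma~3.11 of \cite{Kac} for the transpose of $A_{I^{\mathrm{re}}}$, or the symmetrizable description of dual real roots. You correctly identify and justify this point.

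The paper's version needs only the positivity of the roots $s_{i_1}\cdots s_{i_{a-1}}(\al_{i_a})$, and it handles $\rho'$ and $\rho'+\la$ in one stroke. You split off $\la-w\la\in Q^+_{\mathrm{re}}$ as a separate step. Your reserve alternative is precisely the paper's argument in the case $\theta=\rho'$.
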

\begin{proof}
For a reduced expression $w=s_{i_1}\cdots s_{i_l}$, it is easy to see that,
\[
\theta-w\theta=\sum_{a=1}^l\theta(h_{i_a})
s_{i_1}\cdots s_{i_{a-1}}(\al_{i_a}),
\]
for $\theta=\rho'$ or $\rho'+\la$. Now it is a standard fact from
\cite[Chapter~3]{Kac} that $s_{i_1}\cdots s_{i_{a-1}}(\al_{i_a})$ is a positive real root of $\mf g_{\mathrm{re}}$ and all these vectors have positive height. Again $\theta$ is dominant integral and hence $\theta(h_{i_a}) \geq 1$ for all $1 \leq a \leq l$. Therefore $\htop (\theta -w \theta)\geq l,$ this completes the proof.
\end{proof}

Expanding the normalized Weyl--Kac character formula
\cite[Chapter~10]{Kac} yields the following finite computational formula for $M_\la(\eta,t)$.

\begin{proposition}
\label{prop:finite-Weyl}
Let $\eta\in Q_{\mathrm{re}}^+$, $t\in\Z_{\geq0}$, $N=\htop(\eta)$, and
$\mcal W_N=\{w\in W_{I^{\mathrm{re}}}:1\leq\ell(w)\leq N\}$.
For nonnegative integer families $(p_w),(r_w)$ indexed by
$\mcal W_N$, put $P=\sum_wp_w$ and $R=\sum_wr_w$. Then
\begin{equation}\label{eq:finite-Weyl}
M_\la(\eta;t)
=\sum_{\substack{p_w,r_w\geq0\\
\sum_wp_w\delta_\la(w)+\sum_wr_w\delta_0(w)=\eta}}
(-1)^{\sum_w\ell(w)(p_w+r_w)}
\frac{(t)_P(-t)_R}{\prod_wp_w!\prod_wr_w!}.
\end{equation}
Here $(z)_a=z(z-1)\cdots(z-a+1)$ is a falling factorial,
including $(z)_0=1$. This sum is finite, including when $W_{I^{\mathrm{re}}}$ is infinite.
\end{proposition}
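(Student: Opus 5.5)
We start from the normalized Weyl--Kac ratio \eqref{eq:Weyl-ratio}, which gives $H_\la^t=N_\la^t\,N_0^{-t}$ in $\widehat R$. Both $N_\la$ and $N_0$ have constant term $1$, coming from $w=1$ since $\delta_\la(1)=\delta_0(1)=0$. So we write
\[
N_\la=1+U_\la,\qquad U_\la=\sum_{w\neq1}(-1)^{\ell(w)}e^{-\delta_\la(w)},
\]
and similarly $N_0=1+U_0$. Both $U_\la$ and $U_0$ have zero constant term: by Lemma~\ref{lem:height-bound}, $\delta_\la(w)$ and $\delta_0(w)$ lie in $Q^+_{\mathrm{re}}$ with height at least $\ell(w)\geq1$. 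Using \eqref{eq:formal-power}, we then expand
\[
N_\la^t=\sum_{P\ge0}\binom tP U_\la^P,\qquad N_0^{-t}=\sum_{R\ge0}\binom{-t}RU_0^R.
\]
The first sum is finite because $t\in\Z_{\geq0}$. The second is a formal series whose coefficients are well defined, since $U_0$ lies in the augmentation ideal.

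Next we expand each power $U^P$ by the multinomial theorem over the index set $W_{I^{\mathrm{re}}}\setminus\{1\}$. This gives
\[
U_\la^P=\sum_{\sum p_w=P}\frac{P!}{\prod p_w!}\prod_w (-1)^{\ell(w)p_w}e^{-p_w\delta_\la(w)},
\]
and the analogous formula for $U_0^R$. Multiplying, the factors combine as $\binom tP P!=(t)_P$ and $\binom{-t}R R!=(-t)_R$, and the sign is $(-1)^{\sum_w\ell(w)(p_w+r_w)}$. Extracting the coefficient of $e^{-\eta}$ forces $\sum_wp_w\delta_\la(w)+\sum_wr_w\delta_0(w)=\eta$. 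Combined with $M_\la(\eta;t)=H_\la^t[e^{-\eta}]$ from \eqref{eq:M}, this gives exactly \eqref{eq:finite-Weyl}, provided the index set can be restricted to $\mcal W_N$.

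The main point needing care is finiteness and justifying the rearrangement when $W_{I^{\mathrm{re}}}$ is infinite. Here Lemma~\ref{lem:height-bound} is used once more. If $p_w>0$ or $r_w>0$ in a contributing family, then $\htop\delta(w)\leq\htop\eta=N$, since all the summands lie in $Q^+_{\mathrm{re}}$. Hence $\ell(w)\leq N$, so only $w\in\mcal W_N$ can occur. This set is finite because the Weyl group is generated by finitely many simple reflections. Moreover $\sum_w(p_w+r_w)\leq N$, since each nonzero term contributes height at least $1$. So only finitely many families $(p_w,r_w)$ contribute.

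To make the rearrangement rigorous, we truncate modulo the ideal of monomials of height greater than $N$ in $\widehat R$. Modulo this ideal, $U_\la$ and $U_0$ are congruent to the finite sums over $\mcal W_N$, and all the series involved become polynomials. Every manipulation above is then an identity of polynomials, and reading off the $e^{-\eta}$ coefficient is legitimate.
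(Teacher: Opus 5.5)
Your proposal is correct and follows essentially the same route as the paper's proof. Both write $H_\la^t=(1+V)^t(1+U)^{-t}$, expand binomially and then multinomially, and use Lemma~\ref{lem:height-bound} twice: once to restrict the index set to $\mcal W_N$, and once to bound $P+R\leq N$. Your truncation modulo the ideal of monomials of height greater than $N$ is simply a more explicit justification of the paper's step replacing $V,U$ by the finite sums $V_N,U_N$.
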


\begin{proof}
Note that $M_{\lambda}(\eta;t)
=
H_{\lambda}^{\,t}[e^{-\eta}] = N_{\lambda}^{\,t}N_0^{-t}[e^{-\eta}].$
 Since $\delta_{\lambda}(1)=\delta_0(1)=0,$ we may write
\[
N_{\lambda}=1+V,
\qquad
N_0=1+U,
\]
where
\[
V=
\sum_{\substack{w\in W_{I^{\mathrm{re}}}\\w\neq 1}}
(-1)^{\ell(w)}e^{-\delta_{\lambda}(w)} \text{and  }
U=
\sum_{\substack{w\in W_{I^{\mathrm{re}}}\\w\neq 1}}
(-1)^{\ell(w)}e^{-\delta_0(w)}.
\]
Therefore
\[
H_{\lambda}^{\,t}
=
(1+V)^t(1+U)^{-t}.
\]
By Lemma \ref{lem:height-bound}, $\operatorname{ht}\delta_{\lambda}(w)\geq \ell(w)$ and $
\operatorname{ht}\delta_0(w)\geq \ell(w)$.
Consequently, if $\ell(w)>N$, then
\[
\operatorname{ht}\delta_{\lambda}(w)>N,
\qquad
\operatorname{ht}\delta_0(w)>N.
\]
Since all $\delta_{\lambda}(w)$ and $\delta_0(w)$ belong to $Q_{\mathrm{re}}^+$,
such terms cannot contribute to the coefficient of $e^{-\eta}$.
Hence, for computing coefficient of $e^{-\eta}$, we are restricted
to the finite set
\[
W_N
=
\{\,w\in W_{I^{\mathrm{re}}}:1\leq \ell(w)\leq N\,\}.
\]
Thus we may replace $V$ and $U$ by
\[
V_N=
\sum_{w\in W_N}
(-1)^{\ell(w)}e^{-\delta_{\lambda}(w)},
\qquad
U_N=
\sum_{w\in W_N}
(-1)^{\ell(w)}e^{-\delta_0(w)}.
\]

We now expand
\[
(1+V_N)^t
=
\sum_{P\geq 0}\binom{t}{P}V_N^P.
\]
For a family of nonnegative integers $(p_w)_{w\in W_N}$ satisfying $P=\sum_{w\in W_N}p_w,$ the multinomial theorem gives
\[
V_N^P
=
\sum_{\substack{p_w\geq 0\\\sum_w p_w=P}}
\frac{P!}{\prod_w p_w!}
(-1)^{\sum_w\ell(w)p_w}
e^{-\sum_w p_w\delta_{\lambda}(w)}.
\]
Hence we get
\[
(1+V_N)^t
=
\sum_{(p_w)}
(-1)^{\sum_w\ell(w)p_w}
\frac{(t)_P}{\prod_w p_w!}
e^{-\sum_w p_w\delta_{\lambda}(w)},
\]
since $\binom{t}{P}=\frac{(t)_P}{P!}.$
Similarly, if $(r_w)_{w\in W_N}$ is a family of nonnegative integers with $R=\sum_{w\in W_N}r_w,$ then we get
\[
(1+U_N)^{-t}
=
\sum_{(r_w)}
(-1)^{\sum_w\ell(w)r_w}
\frac{(-t)_R}{\prod_w r_w!}
e^{-\sum_w r_w\delta_0(w)}.
\]
Therefore we have,
\begin{align*}
M_{\lambda}(\eta;t) &=
H_{\lambda}^{\,t}[e^{-\eta}] \cr
&=\sum_{(p_w),(r_w)}
(-1)^{
\sum_w\ell(w)(p_w+r_w)}
\frac{(t)_P(-t)_R}
{\prod_w p_w!\prod_w r_w!}
e^{-
\left(
\sum_w p_w\delta_{\lambda}(w)
+
\sum_w r_w\delta_0(w)
\right)} [e^{-\eta}]\cr
&=\sum_{\substack{p_w,r_w\geq0\\
\sum_w p_w\delta_{\lambda}(w)
+
\sum_w r_w\delta_0(w)=\eta}}
(-1)^{
\sum_w\ell(w)(p_w+r_w)}
\frac{(t)_P(-t)_R}
{\prod_w p_w!\prod_w r_w!},
\end{align*}
where $P=\sum_w p_w,$  $R=\sum_w r_w$ and all sums are taken over $w\in W_N$.\\

\noindent It remains only to verify finiteness. If a family
$(p_w),(r_w)$ contributes, then
\[
\sum_w p_w\delta_{\lambda}(w)
+
\sum_w r_w\delta_0(w)
=
\eta.
\]
Taking heights and using Lemma \ref{lem:height-bound}, we obtain
\[
N
=
\operatorname{ht}(\eta)
\geq
\sum_w p_w+\sum_w r_w
=
P+R.
\]
Hence
\[
P+R\leq N.
\]
Since $W_N$ is finite, only finitely many families
$(p_w),(r_w)$ can occur. This completes prove.

\end{proof}

\begin{remark}
Equations \eqref{eq:independent-multinomial} and
\eqref{eq:finite-Weyl}, substituted into \eqref{eq:coefficient},
give a finite expression for every coefficient from the graph and
Cartan data alone. Equations \eqref{eq:kappa-tuples} and
\eqref{eq:root-general} then give a finite root-multiplicity expression.

\end{remark}

\section{Type \texorpdfstring{$A$}{A} with an arbitrary attached graph}\label{sec:type-A}

\subsection{Attachment at any real vertex and \texorpdfstring{$c=1$}{c=1}}
Let $\mathcal{G}_{\mathrm{re}}=A_l$, so $\mf g_{\mathrm{re}}=\mf{sl}_{l+1}$, with
\[
\al_i=\eps_i-\eps_{i+1},\qquad
\omega_u=\sum_{j=1}^u\eps_j,  
\]
where $ 1\leq i \leq l$.  Note that here $u$ is a vertex leveled by an integer $1,2,\dots, l,$ i.e, $1 \leq u \leq l.$
The graph $\G_{\mathrm{im}}$ still has $n$ imaginary vertices and is arbitrary.
Assume $c=1$, so the real highest weight is $\la= c \Lambda_u=\omega_u$, see subsection \ref{subsec: ch of g_X}.
Let
\[
\mcal T_u=\{T\subseteq\{1,\ldots,\ell+1\}:|T|=u\},
\qquad
b_i(T)=\min(i,u)-|T\cap\{1,\ldots,i\}|, \, 1\leq i \leq l.
\]
Also denote $b(T)=(b_1(T),\dots,b_\ell(T))$ for all $T \in \mcal T_u.$
The realization $$V_{\mf g_{\mathrm{re}}}(\omega_u)=\bigwedge^u\C^{\ell+1}$$ and its
weight character are standard; see \cite[Section~15.2, p.~221]{FH}.
\begin{proposition}
\label{prop:A-character}
For $\eta=\sum_{i=1}^{\ell}k_i\al_i$ and $t\in\Z_{\geq0}$,
the normalized character and its tensor-power coefficients are
\begin{align}
H_{\omega_u}(\by)
&=\sum_{T\in\mcal T_u}\mathbf y^{b(T)},
\label{eq:A-character}\\
M_{\omega_u}(\eta;t)
&=\sum_{\substack{n_T\geq0\ (T\in\mcal T_u)\\
\sum_Tn_T=t\\
\sum_Tn_Tb_i(T)=k_i\ (1\leq i\leq\ell)}}
\frac{t!}{\prod_Tn_T!}.
\label{eq:A-M}
\end{align}
\end{proposition}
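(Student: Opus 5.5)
The plan is to compute the character of $\bigwedge^u\C^{\ell+1}$ directly from its standard weight basis, and then apply Proposition~\ref{prop:weight-multinomial}, or equivalently the multinomial theorem, to obtain \eqref{eq:A-M}.

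\emph{Step 1 (weights).} The module $V_{\mf g_{\mathrm{re}}}(\omega_u)=\bigwedge^u\C^{\ell+1}$ has basis $e_T=e_{j_1}\wedge\cdots\wedge e_{j_u}$, indexed by $T=\{j_1<\cdots<j_u\}\in\mcal T_u$, and $e_T$ has weight $\eps_T=\sum_{j\in T}\eps_j$. All weight spaces are one-dimensional and distinct. By the normalization independence in subsection~\ref{subsec: ch of g_X}, we may work modulo $\sum_j\eps_j$, so only the root-lattice difference $\omega_u-\eps_T$ matters.

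\emph{Step 2 (the key computation).} We express $\omega_u-\eps_T$ in the simple-root basis. For any $\zeta=\sum_{j}z_j\eps_j$ with $\sum_j z_j=0$, one has $\zeta=\sum_{i=1}^{\ell}\bigl(\sum_{j\le i}z_j\bigr)\al_i$, which follows by telescoping $\al_i=\eps_i-\eps_{i+1}$. Apply this to $\zeta=\omega_u-\eps_T$. The partial sum $\sum_{j\le i}z_j$ equals $|\{1,\dots,u\}\cap\{1,\dots,i\}|-|T\cap\{1,\dots,i\}|=\min(i,u)-|T\cap\{1,\ldots,i\}|=b_i(T)$. Hence $\omega_u-\eps_T=\sum_i b_i(T)\al_i$, so $e^{-(\omega_u-\eps_T)}=\by^{b(T)}$. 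We should also check that $b_i(T)\ge0$. This holds because the first $i$ elements of $\{1,\dots,\ell+1\}$ contain at most $\min(i,u)$ elements of $T$. This is consistent with $\omega_u$ being the highest weight. Summing over $T\in\mcal T_u$ gives \eqref{eq:A-character}.

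\emph{Step 3.} Since $M_{\omega_u}(\eta;t)=H_{\omega_u}^t[e^{-\eta}]$, we expand $\bigl(\sum_T\by^{b(T)}\bigr)^t$ by the multinomial theorem. This gives the sum of $t!/\prod_T n_T!$ over families with $\sum_T n_T=t$, where each family contributes the monomial $\by^{\sum_T n_Tb(T)}$. Extracting the coefficient of $\by^{\bk}$ imposes $\sum_T n_Tb_i(T)=k_i$ for every $i$, which is \eqref{eq:A-M}. Note that distinct $T$ give distinct monomials, so there are no multiplicities $h_\la$ to track. The sum is finite because $\mcal T_u$ is finite.

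The only nontrivial point is the telescoping identity in Step~2 and its correct indexing at the boundary $i=\ell$. There one uses $\sum_{j\le\ell+1}z_j=0$, so no $\al_{\ell+1}$ term appears. Everything else is routine.
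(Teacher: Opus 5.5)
Your proposal is correct and follows the same route as the paper. Both arguments take the weight basis $e_T$ of $\bigwedge^u\C^{\ell+1}$, convert $\omega_u-\sum_{j\in T}\varepsilon_j$ to simple-root coordinates via partial sums to identify $b_i(T)$, and then apply the multinomial theorem to $H_{\omega_u}^t$; your explicit use of $\sum_j z_j=0$ at the boundary $i=\ell$ and the check $b_i(T)\geq 0$ are details the paper leaves implicit.
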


\begin{proof}
For $T=\{j_1<\cdots<j_u\}\in \mathcal{T}_u,$ let $e_T=e_{j_1}\wedge\cdots\wedge e_{j_u}.$  Then $e_T$ is a weight vector of weight $
\operatorname{wt}(e_T)
=\dis\sum_{j\in T}\varepsilon_j.$ The highest-weight vector $
e_1\wedge\cdots\wedge e_u,$
has weight $\omega_u.$
Therefore
\begin{equation}
\omega_u-\operatorname{wt}(e_T)
=
\sum_{j=1}^u\varepsilon_{j}
-
\sum_{j\in T}\varepsilon_{j}.
\label{eq:weight-difference}
\end{equation}
Now we express the right-hand side of \eqref{eq:weight-difference} in terms of the simple roots.
Suppose
\[
\gamma=\sum_{r=1}^{\ell+1}a_r\varepsilon_r
      =\sum_{i=1}^{\ell}c_i\alpha_i.
\]
Then we have
\[
c_i=a_1+\cdots+a_i,
\qquad 1\leq i\leq \ell.
\]
With this observation it is easy to see that the coefficient of $\alpha_i$ in
$\omega_u-\operatorname{wt}(e_T)$ is $b_i(T)$.
Thus
\begin{equation}
\omega_u-\operatorname{wt}(e_T)
=
\sum_{i=1}^{\ell}b_i(T)\alpha_i.
\label{eq:b-coordinates}
\end{equation}
Since the weight spaces of $V_{\mf g_{\mathrm{re}}}(\omega_u)$ have multiplicity one and weights are of the form $e_T$, indexed by
$T\in\mathcal{T}_u$. Hence we have
\[
\operatorname{Ch}V_{\mf g_{\mathrm{re}}}(\omega_u)
=
\sum_{T\in\mathcal{T}_u}
e^{\operatorname{wt}(e_T)}.
\]
Therefore, using \eqref{eq:b-coordinates} we have
\begin{align*}
H_{\omega_u}
&=
e^{-\omega_u}\operatorname{Ch}V_{\mf g_{\mathrm{re}}}(\omega_u)\\
&=\sum_{T\in\mathcal{T}_u}
e^{\operatorname{wt}(e_T)-\omega_u}
\\
&=
\sum_{T\in\mathcal{T}_u}
e^{-\sum_{i=1}^{\ell}b_i(T)\alpha_i} \cr
&=\sum_{T\in\mathcal{T}_u}
\prod_{i=1}^{\ell}y_i^{b_i(T)} =\sum_{T\in\mathcal{T}_u}
\mathbf y^{b(T)}.
\end{align*}
This proves \emph{(9.1)}.
Applying the multinomial theorem we have
\[
H_{\omega_u}^t
=
\left(
\sum_{T\in\mathcal{T}_u}
\prod_{i=1}^{\ell}y_i^{b_i(T)}
\right)^t=
\sum_{\substack{
n_T\geq 0\\
\sum_{T\in\mathcal{T}_u}n_T=t
}}
\frac{t!}{\prod_{T\in\mathcal{T}_u}n_T!}
\prod_{i=1}^{\ell}
y_i^{\sum_{T\in\mathcal{T}_u}n_Tb_i(T)}.
\]
Since $e^{-\eta}=
\prod_{i=1}^{\ell}y_i^{k_i},$ a term contributes to the coefficient of $e^{-\eta}$ precisely when
\[
\sum_{T\in\mathcal{T}_u}n_Tb_i(T)=k_i,
\qquad 1\leq i\leq \ell.
\]
Therefore
\[
M_{\omega_u}(\eta;t)
=
H_{\omega_u}^t[e^{-\eta}]
=\sum_{\substack{
n_T\geq 0\;(T\in\mathcal{T}_u)\\
\sum_T n_T=t\\
\sum_T n_Tb_i(T)=k_i,\;1\leq i\leq \ell
}}
\frac{t!}{\prod_T n_T!}.
\]
This proves \emph{(9.2)} and hence the proposition.

\end{proof}

\begin{corollary}\label{cor:A-general-vertex}
For any imaginary graph $\G_{\im}$, any attachment vertex $v$ in it, and
any real vertex $u$ of $A_l$, with $c=1$,
\begin{equation}\label{eq:A-general-vertex}
S^q[e^{-(\eta+\nu(\mvec))}]
=(-1)^{|\mvec|}\pi_{\G}^{\mvec}(q)
\sum_{\substack{n_T\geq0\\
\sum_Tn_T=m_v\\
\sum_Tn_Tb_i(T)=k_i\ (1\leq i\leq\ell)}}
\frac{m_v!}{\prod_Tn_T!}.
\end{equation}
The root multiplicities are obtained by using \eqref{eq:A-M} in
\eqref{eq:root-general}; if $\G$ is chordal, use
\eqref{eq:chordal-root} instead.
\end{corollary}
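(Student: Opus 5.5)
The corollary is a direct specialization, so the plan is pure substitution. Start from the general coefficient formula \eqref{eq:coefficient} of Corollary~\ref{cor:coefficient}:
\[
S^q[e^{-(\eta+\nu(\mvec))}]=(-1)^{|\mvec|}M_\la(\eta;m_v)\pi_{\G_{\im}}^{\mvec}(q).
\]
It holds for any symmetrizable $\mf g_{\mathrm{re}}$, any attachment pair $(u,v)$, and any $\G_{\im}$.

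Now specialize to $\mf g_{\mathrm{re}}=\mf{sl}_{\ell+1}$ and $c=1$. In this case the highest weight is $\la=c\La_u=\omega_u$. The independence of the normalized character from the remaining Cartan values (subsection~\ref{subsec: ch of g_X}) ensures that $H_\la$, and hence $M_\la(\eta;t)$, computed in the realization of $\mf g(A)$ agrees with the one computed from $\bigwedge^u\C^{\ell+1}$. We then invoke \eqref{eq:A-M} of Proposition~\ref{prop:A-character} with $t=m_v$. This replaces $M_{\omega_u}(\eta;m_v)$ by the multinomial sum over families $(n_T)_{T\in\mcal T_u}$ with $\sum_T n_T=m_v$ and $\sum_T n_Tb_i(T)=k_i$. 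Substituting gives \eqref{eq:A-general-vertex}.

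Two degenerate cases should be checked. When $m_v=0$, the only admissible family is $n_T=0$, so the sum equals $\delta_{\eta,0}=M_{\omega_u}(\eta;0)$, which is consistent. When $\mvec=\bzero$, we have $\pi^{\bzero}=1$ and the formula reduces to $\delta_{\eta,0}$, again consistent with the remark after Theorem~\ref{cor:root-general}.

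For the statement about root multiplicities, evaluate \eqref{eq:A-M} at the scaled data $(\eta/d, m_v/d)$ for each divisor $d\mid g(\bk,\mvec)$, and insert the result into \eqref{eq:root-general}. If $\G_{\im}$ is chordal, insert it into \eqref{eq:chordal-root} instead. Scaling by $d$ is legitimate because $d$ divides every $k_i$ and every $m_j$, so $\eta/d\in Q^+_{\mathrm{re}}$ and $m_v/d\in\Z_{\geq0}$.

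The only point needing care is the identification of $\la$ with $\omega_u$ across the two realizations, and the paper's normalization already handles it. I expect no real obstacle.
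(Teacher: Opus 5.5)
Your proposal is correct and matches the paper's proof. The paper likewise just substitutes \eqref{eq:A-M} with $t=m_v$ into \eqref{eq:coefficient} of Corollary~\ref{cor:coefficient}, and into \eqref{eq:root-general} (or \eqref{eq:chordal-root} in the chordal case) for the multiplicities. Your extra checks of the degenerate cases $m_v=0$ and $\mvec=\bzero$, and of the scaling by $d$, agree with the paper's conventions.
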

\begin{proof}
Substitute the standard character computation \eqref{eq:A-M}
into Corollary ~\ref{cor:coefficient} and Theorem~\ref{cor:root-general}.
\end{proof}

\subsection{The real endpoint specialization}
Take $u=\ell$ and still $c=1$. Define
\begin{equation}\label{eq:K}
K_\ell(\bk;t)=
\begin{cases}
\displaystyle
\frac{t!}{k_1!\,\prod_{i=2}^\ell(k_i-k_{i-1})!\,(t-k_\ell)!},
&0\leq k_1\leq\cdots\leq k_\ell\leq t,\\[3mm]
0,&\text{otherwise}.
\end{cases}
\end{equation}

\begin{proposition}
\label{prop:A-endpoint}
For $u=\ell$ and $c=1$,
\begin{equation}\label{eq:A-endpoint-H}
H_{\omega_\ell}(\by)
=1+y_\ell+y_{\ell-1}y_\ell+\cdots+y_1\cdots y_\ell,
\qquad
M_{\omega_\ell}(\eta;t)=K_\ell(\bk;t).
\end{equation}
\end{proposition}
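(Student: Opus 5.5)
We specialize Proposition~\ref{prop:A-character} to $u=\ell$. Here $\mcal T_\ell$ consists of the $\ell+1$ subsets $T_j=\{1,\ldots,\ell+1\}\setminus\{j\}$, $1\le j\le \ell+1$. For $T_j$ we have $|T_j\cap\{1,\ldots,i\}|=i$ if $i<j$ and $i-1$ if $i\ge j$. Since $\min(i,\ell)=i$ for $1\le i\le\ell$, this gives $b_i(T_j)=0$ for $i<j$ and $b_i(T_j)=1$ for $i\ge j$. Hence $\mathbf y^{b(T_j)}=y_jy_{j+1}\cdots y_\ell$, with $T_{\ell+1}$ giving the empty product $1$. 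Summing over $j$ via \eqref{eq:A-character} yields the stated form of $H_{\omega_\ell}$.

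For $M_{\omega_\ell}(\eta;t)$ we apply \eqref{eq:A-M} with $n_j:=n_{T_j}$. The constraints become
\[
\sum_{j=1}^{\ell+1}n_j=t,\qquad \sum_{j\le i}n_j=k_i\quad(1\le i\le\ell).
\]
These equations determine the $n_j$ uniquely:
\[
n_1=k_1,\qquad n_j=k_j-k_{j-1}\quad(2\le j\le\ell),\qquad n_{\ell+1}=t-k_\ell.
\]
So the sum in \eqref{eq:A-M} has at most one term. That term exists exactly when all $n_j\ge0$, that is, when $0\le k_1\le\cdots\le k_\ell\le t$. In that case its value is $t!/\prod_j n_j!$, which equals $K_\ell(\bk;t)$ as defined in \eqref{eq:K}. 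Otherwise the sum is empty and equals $0$, which matches the second case of \eqref{eq:K}.

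The proof is a direct specialization, and no step is a real obstacle. The only thing to verify carefully is the computation of $b_i(T_j)$, in particular the boundary conventions: the case $j=\ell+1$, and the fact that $\min(i,\ell)=i$ on the whole range $1\le i\le\ell$. An independent check is the alternative route $H_{\omega_\ell}^t=(1+y_\ell+\cdots+y_1\cdots y_\ell)^t$ expanded by the multinomial theorem. It gives the same bijection between monomials and the differences of the $k_i$.
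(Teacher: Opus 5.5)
Your proposal is correct and follows the paper's proof essentially verbatim. Both compute $b_i(T_r)$ from Proposition~\ref{prop:A-character}, and both solve the same triangular system for the monomial counts: the paper's $c_0,c_r$ in the multinomial expansion of $H_{\omega_\ell}^t$ are your $n_{\ell+1},n_r$, and a nonnegative solution exists exactly when $0\le k_1\le\cdots\le k_\ell\le t$.
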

\begin{proof}
We specialize Proposition \ref{prop:A-character} to the endpoint $u=\ell$. Note that every $T\in\mathcal{T}_{\ell}$ is obtained by deleting exactly one
element from $\{1,\ldots,\ell+1\}$. For
$1\leq r\leq \ell+1$, write
\[
T_r
=
\{1,\ldots,\ell+1\}\setminus\{r\}.
\]
By Proposition~\ref{prop:A-character},
\[
H_{\omega_\ell}(y)
=
\sum_{T\in\mathcal{T}_{\ell}}
\prod_{i=1}^{\ell}y_i^{b_i(T)}, \text{ where } b_i(T)= \min(i,\ell)-|T\cap\{1,\ldots,i\}|.
\]

\noindent Since $1\leq i\leq\ell$, we have $\min(i,\ell)=i$, and hence $b_i(T)=i-|T\cap\{1,\ldots,i\}|.$ From this it clearly follows that $b_i(T_{l+1})=0$ for all $i$. Further, it is easy to see that for $1 \leq i,r \leq l,$
 $$b_i(T_r)= \begin{cases}
     0, \,\,\,\, \text{ if }, \,\, i <r\\
     1,   \,\,\,\, \text{ if }, \,\, i \geq r.
 \end{cases}$$
Consequently, we get
\[
H_{\omega_\ell}(y)
=
1+y_{\ell}
+y_{\ell-1}y_{\ell}
+\cdots
+y_1y_2\cdots y_{\ell}.
\]
This proves the first assertion. Now by definition,
\[
M_{\omega_\ell}(\eta;t)
=
H_{\omega_\ell}(y)^t
\left[ \mathbf y^ \mathbf k\right].
\]
Let $c_0$ denote the number of times the term $1$ is chosen in the
multinomial expansion, and for $1\leq r\leq\ell$, let $c_r$ denote
the number of times the term $y_r y_{r+1}\cdots y_{\ell}$
is chosen. Then we have the following relations:
\[
\dis\sum_{j=0}^{\ell}c_j=t, \,\,\,\,\,  \text{ and} \,\,\,\,\,  k_r=  \dis\sum_{j=1}^{r}c_j, \,\,\,\, 1 \leq r \leq l.
\]
From this two relations we can conclude that a term contribute to the coefficient of $\mathbf y^{\mathbf k}$ only when $0 \leq k_1 \leq k_2 \leq \dots \leq k_l\leq t$ and in this case the coefficient is 
\[
\frac{t!}{c_0!c_1!\cdots c_{\ell}!}=\frac{t!}{
k_1!
\prod_{i=2}^{\ell}(k_i-k_{i-1})!
(t-k_{\ell})!
}.
\]
Therefore $M_{\omega_\ell}(\eta;t)
=K_\ell(\mathbf k;t).$

\end{proof}
\noindent The following corollary follows from Corollaries~\ref{cor:coefficient}, ~\ref{cor:chordal-root}, Proposition \ref{prop:A-endpoint} and Theorem \ref{cor:root-general}.
\begin{corollary}[Type $A$ endpoint specialization]
\label{cor:A-general-G}
For $u=\ell$ and $c=1$,
\begin{equation}\label{eq:A-general-G-coefficient}
{
S^q[e^{-(\eta+\nu(\mvec))}]
=(-1)^{|\mvec|}K_\ell(\bk;m_v)\pi_{\G_{\mathrm{im}}}^{\mvec}(q).
}
\end{equation}
If $\mvec\neq\bzero$, then
\begin{equation}\label{eq:A-general-G-root}
\mult(\eta+\nu(\mvec))
=\sum_{d\mid g(\bk,\mvec)}\frac{\Mob(d)}d
K_\ell\left(\frac\bk d;\frac{m_v}d\right)
\kappa_{\G_{\mathrm{im}}}\left(\frac\mvec d\right).
\end{equation}
If $\G_{\mathrm{im}}$ is chordal and $J=\supp(\mvec)$ is connected, then we get
\begin{equation}\label{eq:A-chordal-root}
\mult(\eta+\nu(\mvec))
=\frac1{m_{r_0}}\sum_{d\mid g(\bk,\mvec)}\Mob(d)
K_\ell\left(\frac\bk d;\frac{m_v}d\right)
\prod_{r\in {I^{\mathrm{re}}}\setminus\{r_0\}}
\binom{a_r(\mvec)/d-1}{m_r/d},
\end{equation}
where $r_0=\min J$ in the perfect elimination ordering.
\end{corollary}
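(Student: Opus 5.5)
The corollary comes from substituting Proposition~\ref{prop:A-endpoint} into the general results; I would argue it in three steps, one per displayed formula.

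For \eqref{eq:A-general-G-coefficient}, I apply Corollary~\ref{cor:coefficient}, equation \eqref{eq:coefficient}, with $\la=\omega_\ell$. This is legitimate because for $c=1$ and $u=\ell$ the weight $\la=c\La_u$ agrees with $\omega_\ell$ on all simple coroots of $\mf{sl}_{\ell+1}$. By the normalized-character independence of Subsection~\ref{subsec: ch of g_X}, $H_\la=H_{\omega_\ell}$, and hence $M_\la(\eta;m_v)=M_{\omega_\ell}(\eta;m_v)$. Proposition~\ref{prop:A-endpoint} identifies this with $K_\ell(\bk;m_v)$, and substituting gives the formula.

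For \eqref{eq:A-general-G-root}, I apply Theorem~\ref{cor:root-general} with the same $\la$. For each divisor $d\mid g(\bk,\mvec)$ the term $M_\la(\eta/d;m_v/d)$ is needed. Since $d$ divides every $k_i$ and $m_v$, the vector $\bk/d$ is integral and $\eta/d=\eta(\bk/d)\in Q^+_{\mathrm{re}}$. Proposition~\ref{prop:A-endpoint} then applies with $\bk/d$ and $t=m_v/d$, giving $K_\ell(\bk/d;m_v/d)$. One checks that the support condition in \eqref{eq:K} is respected by this scaling: $0\le k_1\le\cdots\le k_\ell\le m_v$ holds if and only if the same chain holds after dividing by $d$. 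This consistency is not actually required, since the proposition is applied directly at the scaled data.

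For \eqref{eq:A-chordal-root}, I insert $M_\la(\eta/d;m_v/d)=K_\ell(\bk/d;m_v/d)$ into Corollary~\ref{cor:chordal-root}. Equivalently, one can substitute \eqref{eq:chordal-kappa} for $\kappa_{\G_{\mathrm{im}}}(\mvec/d)$ into the previous display. For this one uses that $\supp(\mvec/d)=\supp(\mvec)=J$ is connected, so $r_0$ is unchanged, and that $a_r(\mvec/d)=a_r(\mvec)/d$. The factor $\frac1d\cdot\frac{1}{m_{r_0}/d}=\frac1{m_{r_0}}$ then comes out of the sum.

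The only point needing care is the product's index set in \eqref{eq:A-chordal-root}. As printed it runs over $I^{\mathrm{re}}\setminus\{r_0\}$, but it must be read as $J\setminus\{r_0\}\subseteq I^{\mathrm{im}}$, matching Corollary~\ref{cor:chordal-root}. This is evidently a typographical slip. With that reading, no step presents a genuine obstacle: everything reduces to substituting the explicit tensor-power multiplicity into results already established.
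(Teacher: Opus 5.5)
Your proposal is correct and matches the paper's argument: the paper also obtains the corollary by substituting $M_{\omega_\ell}(\eta;t)=K_\ell(\bk;t)$ from Proposition~\ref{prop:A-endpoint} into Corollary~\ref{cor:coefficient}, Theorem~\ref{cor:root-general} and Corollary~\ref{cor:chordal-root}. You are also right that the product in \eqref{eq:A-chordal-root} must run over $J\setminus\{r_0\}$, as in Corollary~\ref{cor:chordal-root}; the printed $I^{\mathrm{re}}$ is a slip.
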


\subsection{The real--imaginary path as a consequence}
Now set $\G=L_{\ell+n}$ and $v=1$. The entire graph is a path with
real vertices $\al_1,\ldots,\al_\ell$ followed by imaginary
vertices $\be_1,\ldots,\be_n$. Taking both entries on every
imaginary edge and the attachment to be $-1$ gives the usual
simply-laced real--imaginary path matrix. The formulas depend only
on the already specified real-row entry and the imaginary zero pattern.

\begin{corollary}[The explicit real--imaginary path formula]
\label{cor:A-path}
With $u=\ell$, $c=1$, $\G_{\mathrm{im}}=L_n$, and $v=1$,
\begin{equation}\label{eq:A-path-coefficient}
S^q[e^{-(\eta+\nu(\mvec))}]
=(-1)^{|\mvec|}K_\ell(\bk;m_1)
\binom q{m_1}\prod_{j=2}^n\binom{q-m_{j-1}}{m_j}.
\end{equation}
For the root-multiplicity assertions, assume $\mvec\neq\bzero$.
If $\supp(\mvec)=[r,s]$, then
\begin{equation}\label{eq:A-path-root}
\mult(\eta+\nu(\mvec))
=\frac1{m_r}\sum_{d\mid g(\bk,\mvec)}\Mob(d)
K_\ell\left(\frac\bk d;\frac{m_1}d\right)
\prod_{j=r+1}^s
\binom{(m_{j-1}+m_{j})/d-1}{m_{j}/d}.
\end{equation}
If the imaginary support is
disconnected, the multiplicity is zero.
\end{corollary}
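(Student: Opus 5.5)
The plan is to specialize the results already proved for an arbitrary attached graph, namely Corollary~\ref{cor:A-general-G} and Corollaries~\ref{cor:path-coefficients}, \ref{cor:path-roots}. With $\G_{\mathrm{im}}=L_n$ and $v=1$, the real factor in the coefficient formula is $M_{\omega_\ell}(\eta;m_v)=K_\ell(\bk;m_1)$ by Proposition~\ref{prop:A-endpoint}. For the graph factor I will use the path product \eqref{eq:path-pi} with the convention $m_0=0$. Its first factor is $\binom{q-m_0}{m_1}=\binom q{m_1}$, and the remaining factors are $\binom{q-m_{j-1}}{m_j}$ for $2\le j\le n$. Substituting both into \eqref{eq:A-general-G-coefficient} (equivalently \eqref{eq:path-coefficient} with $M_\la$ replaced by $K_\ell$) gives \eqref{eq:A-path-coefficient} directly.

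For the multiplicity formula, assume $\mvec\neq\bzero$ and $\supp(\mvec)=[r,s]$. I will start from Theorem~\ref{cor:root-general}, replacing $M_\la(\eta/d;m_1/d)$ by $K_\ell(\bk/d;m_1/d)$, again by Proposition~\ref{prop:A-endpoint}. Here $\bk/d$ is an integer vector because $d\mid g(\bk,\mvec)$. Then I insert the path value \eqref{eq:path-kappa} of $\kappa_{L_n}(\mvec/d)$. Since $\supp(\mvec/d)=[r,s]$, this gives
\[
\kappa_{L_n}(\mvec/d)=\frac{d}{m_r}\prod_{j=r+1}^s\binom{(m_{j-1}+m_j)/d-1}{m_j/d}.
\]
The factor $d/m_r$ cancels the $1/d$ from Theorem~\ref{cor:root-general}, leaving the overall $1/m_r$ in \eqref{eq:A-path-root}. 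This is the same scaling step used in Corollary~\ref{cor:chordal-root}.

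If the imaginary support is disconnected, i.e.\ not an interval, then $\kappa_{L_n}(\mvec/d)=0$ for every $d$ by Corollary~\ref{cor:path-roots} or Proposition~\ref{prop:kappa-general}, so the multiplicity is zero.

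The only point needing care is the role of $v=1$ when $r>1$. In that case $m_1=0$, and $K_\ell(\bk/d;0)=\delta_{\bk,\bzero}$, so the formula correctly reduces to the purely imaginary case. This needs no separate argument, because \eqref{eq:K} already handles $t=0$. I expect no real obstacle beyond tracking the $m_0=0$ convention and the divisor rescaling.
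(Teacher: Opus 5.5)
Your proposal is correct and is essentially the paper's proof: \eqref{eq:A-path-coefficient} comes from substituting the path product \eqref{eq:path-pi} (with $m_0=0$) into \eqref{eq:A-general-G-coefficient}, and \eqref{eq:A-path-root} from inserting \eqref{eq:path-kappa}, evaluated at $\mvec/d$, into the divisor sum \eqref{eq:A-general-G-root}, which is Theorem~\ref{cor:root-general} with $M_{\omega_\ell}=K_\ell$. Your tracking of the cancellation of $d/m_r$ against $1/d$, and of the case $m_1=0$ via $K_\ell(\bk/d;0)=\delta_{\bk,\bzero}$, spells out steps the paper leaves implicit.
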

\begin{proof}
Equation \eqref{eq:A-path-coefficient}  follows from \eqref{eq:path-pi} and
\eqref{eq:A-general-G-coefficient}. Equation \eqref{eq:A-path-root} follows from equations \eqref{eq:A-general-G-root} and \eqref{eq:path-kappa}. 
\end{proof}

\subsection{An arbitrary attachment integer at the real endpoint}
Return to an arbitrary imaginary graph $\G_{\mathrm{im}}$, and allow $c\geq1$
with $u=\ell$ and $\mf g_{\mathrm{re}} \cong \mf{sl}_{\ell+1}$ Define
\[
\mcal B_c=\{\mathbf b=(b_1,\ldots,b_\ell)\in\Z_{\geq0}^\ell:
0\leq b_1\leq\cdots\leq b_\ell\leq c\}.
\]
The standard Schur-module description \cite[Proposition~15.15]{FH},
followed by duality, gives the realization
\[
V_{\mf g_{\mathrm{re}}}(c\omega_\ell)=\Sym^c((\C^{\ell+1})^*).
\]

\noindent It is easy to verify the following lemma.

\begin{lemma}\label{lem:symmetric-monomial-bijection}
There is a bijection
\[
\left\{
(t_0,t_1,\dots,t_\ell)\in \mathbb Z_{\geq 0}^{\ell+1}
\;\middle|\;
t_0+t_1+\cdots+t_\ell=c
\right\}
\longrightarrow \mcal B_c
\]
given by
\[
(t_0,t_1,\dots,t_\ell)
\longmapsto
(b_1,\dots,b_\ell),
\qquad
b_i=t_1+\cdots+t_i
\quad (1\leq i\leq \ell).
\]
Its inverse is
\[
t_1=b_1,\qquad
t_i=b_i-b_{i-1}\quad (2\leq i\leq \ell),
\qquad
t_0=c-b_\ell.
\]
\end{lemma}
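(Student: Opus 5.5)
The plan is to check directly that the two maps are well defined and mutually inverse; no earlier result of the paper is needed.

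Write $\phi(t_0,\dots,t_\ell)=(b_1,\dots,b_\ell)$ with $b_i=t_1+\cdots+t_i$. First I show $\phi$ lands in $\mcal B_c$. Each $b_i$ is a sum of nonnegative integers, so $b_1=t_1\geq 0$. Moreover $b_i-b_{i-1}=t_i\geq0$ for $2\leq i\leq\ell$, so the $b_i$ are nondecreasing. Finally $b_\ell=c-t_0\leq c$, because $t_0\geq0$ and $t_0+\cdots+t_\ell=c$. Hence $\phi(\mathbf t)\in\mcal B_c$.

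Next define $\psi(\mathbf b)=(t_0,\dots,t_\ell)$ by the stated inverse formulas: $t_1=b_1$, $t_i=b_i-b_{i-1}$ for $2\leq i\leq\ell$, and $t_0=c-b_\ell$. I check that $\psi$ lands in the source set. The conditions $0\leq b_1\leq\cdots\leq b_\ell\leq c$ defining $\mcal B_c$ give exactly $t_1\ge0$, $t_i\ge0$ and $t_0\ge0$. The sum telescopes: $t_1+\cdots+t_\ell=b_\ell$, so $t_0+\cdots+t_\ell=c$.

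It remains to verify $\psi\circ\phi=\mathrm{id}$ and $\phi\circ\psi=\mathrm{id}$. For $\psi\circ\phi$, differences of consecutive partial sums recover the $t_i$ for $i\geq1$, and $c-b_\ell=t_0$ by the sum constraint. For $\phi\circ\psi$, partial sums of the differences telescope back to $b_i$. These are routine telescoping identities. The only step needing any care is that the sum constraint $\sum t_j=c$ exactly matches the endpoint condition $b_\ell\le c$ through $t_0=c-b_\ell$; this is where the two sets correspond, and it is handled above.
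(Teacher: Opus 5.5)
Your proof is correct, and it is the same routine verification the paper intends: the paper gives no argument beyond saying the lemma is easy to verify. Checking that both maps land in the right sets and that they are mutually inverse by telescoping is exactly that check.
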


\begin{proposition}
\label{prop:A-endpoint-c}
For every $c\geq1$ and $t\in\Z_{\geq0}$,
\begin{align}
H_{c\omega_\ell}(\by)
&=\sum_{\mathbf b\in\mcal B_c}\by^{\mathbf b},
\label{eq:A-endpoint-c-H}\\
M_{c\omega_\ell}(\eta;t)
&=\sum_{\substack{n_{\mathbf b}\geq0\ (\mathbf b\in\mcal B_c)\\
\sum_{\mathbf b}n_{\mathbf b}=t,\quad
\sum_{\mathbf b}n_{\mathbf b}\mathbf b=\bk}}
\frac{t!}{\prod_{\mathbf b}n_{\mathbf b}!}.
\label{eq:A-endpoint-c-M}
\end{align}
\end{proposition}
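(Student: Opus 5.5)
We follow the proof of Proposition~\ref{prop:A-character} and Proposition~\ref{prop:A-endpoint}, now using the realization $V_{\mf g_{\mathrm{re}}}(c\omega_\ell)=\Sym^c((\C^{\ell+1})^*)$.

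\emph{Weight basis.} Let $e_1^*,\dots,e_{\ell+1}^*$ be the dual basis, so $e_j^*$ has weight $-\eps_j$. Since $\sum_j\eps_j=0$ on the Cartan of $\mf{sl}_{\ell+1}$, we have $-\eps_j=\sum_{r\neq j}\eps_r$. The monomials $(e_1^*)^{s_1}\cdots(e_{\ell+1}^*)^{s_{\ell+1}}$ with $\sum s_j=c$ form a weight basis. The monomial with weight
\[
-\sum_j s_j\eps_j=c\,\omega_\ell-c\,\eps_{\ell+1}-\sum_j s_j\eps_j
\]
is the highest one, namely $(e_{\ell+1}^*)^c$; indeed it is killed by all $e_{\al_i}$ and has weight $c\omega_\ell$. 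Write $t_0=s_{\ell+1}$ and $t_i=s_i$ for $1\le i\le\ell$.

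\emph{Normalized weights.} The difference between the highest weight and the weight of this monomial is
\[
\sum_{j\le\ell}s_j\eps_j+(s_{\ell+1}-c)\eps_{\ell+1}=\sum_{j=1}^{\ell} t_j(\eps_j-\eps_{\ell+1}).
\]
Apply the conversion used in the proof of Proposition~\ref{prop:A-character}: if $\ga=\sum_r a_r\eps_r$, then its $\al_i$-coefficient is $a_1+\cdots+a_i$. This gives coefficients $b_i=t_1+\cdots+t_i$ for $1\le i\le\ell$. These are nonnegative integers bounded by $c$, as they must be, and the coefficient at $\eps_{\ell+1}$ is consistent since the total sum is zero.

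\emph{The character.} Every monomial is a distinct basis vector. By Lemma~\ref{lem:symmetric-monomial-bijection}, the exponent vectors $(t_0,\dots,t_\ell)$ correspond bijectively to $\mcal B_c$, and since the map is injective, weight multiplicities are one. Hence $H_{c\omega_\ell}=e^{-c\omega_\ell}\operatorname{Ch}=\sum_{\mathbf b\in\mcal B_c}\by^{\mathbf b}$, which is \eqref{eq:A-endpoint-c-H}. The independence statement of subsection~\ref{subsec: ch of g_X} lets us identify $c\Lambda_u$ with $c\omega_\ell$.

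\emph{Tensor powers.} Since $M_{c\omega_\ell}(\eta;t)=H_{c\omega_\ell}^t[\by^{\bk}]$, expand the $t$-th power with the multinomial theorem over the finite index set $\mcal B_c$, exactly as in Proposition~\ref{prop:weight-multinomial}. The coefficient of $\by^{\bk}$ is then the sum over families $(n_{\mathbf b})$ with $\sum n_{\mathbf b}=t$ and $\sum n_{\mathbf b}\mathbf b=\bk$, each weighted by $t!/\prod n_{\mathbf b}!$. This is \eqref{eq:A-endpoint-c-M}.

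The only delicate step is the sign and duality bookkeeping: the highest weight vector must be identified as $(e_{\ell+1}^*)^c$, not $(e_1^*)^c$, so that the $b_i$ come out as partial sums and are increasing rather than decreasing. As a check, for $c=1$ the formula must reduce to Proposition~\ref{prop:A-endpoint}.
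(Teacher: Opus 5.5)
Your proof is correct and follows the paper's argument essentially step for step: the monomial basis of $\Sym^c((\C^{\ell+1})^*)$, the computation $c\omega_\ell-\mu=\sum_{i=1}^{\ell}t_i(\eps_i-\eps_{\ell+1})$ giving $b_i=t_1+\cdots+t_i$, the bijection of Lemma~\ref{lem:symmetric-monomial-bijection}, and the multinomial expansion of $H_{c\omega_\ell}^t$. One cosmetic slip is in your first display: the term there should be $c\omega_\ell+c\eps_{\ell+1}$ (which vanishes since $\omega_\ell=-\eps_{\ell+1}$), not $c\omega_\ell-c\eps_{\ell+1}$; your subsequent difference computation is correct and does not rely on it.
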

\begin{proof}

Let $e_1^*,\dots,e_{\ell+1}^*$
be the standard basis of $(\mathbb C^{\ell+1})^*$. Then a basis of
$\operatorname{Sym}^c((\mathbb C^{\ell+1})^*)$ is given by the
symmetric monomials
\[
(e_1^*)^{t_1}\cdots (e_\ell^*)^{t_\ell}(e_{\ell+1}^*)^{t_0},
\qquad
t_0+t_1+\cdots+t_\ell=c, \,\,\, t_i\in\mathbb Z_{\geq 0}.
\]
Let $\mu$ be the weight of such a monomial. Since the weight of
$e_i^*$ is $-\varepsilon_i$, we have
\[
\mu
=
-\sum_{i=1}^{\ell}t_i\varepsilon_i
-t_0\varepsilon_{\ell+1}.
\]
The highest weight of
$\operatorname{Sym}^c((\mathbb C^{\ell+1})^*)$ is $c\omega_\ell$.
Therefore
\begin{align*}
c\omega_\ell-\mu
&=
-c\varepsilon_{\ell+1}
+\sum_{i=1}^{\ell}t_i\varepsilon_i
+t_0\varepsilon_{\ell+1}\\
&=
\sum_{i=1}^{\ell}t_i
(\varepsilon_i-\varepsilon_{\ell+1}), \,\, \text{since } c-t_0=t_1+\cdots+t_\ell\\
&=\sum_{i=1}^{\ell}
t_i(\alpha_i+\cdots+\alpha_\ell)\\
&=
t_1\alpha_1
+(t_1+t_2)\alpha_2
+\cdots+
(t_1+\cdots+t_\ell)\alpha_\ell\\
&=b_1\alpha_1+\cdots+b_\ell\alpha_\ell,
\end{align*}
where $ b_i=t_1+\cdots+t_i,$ for all $1\leq i\leq\ell.$ Hence 
$e^{\mu-c\omega_\ell}
=
y_1^{b_1}\cdots y_\ell^{b_\ell}
=y^{\mathbf b}.$ Hence by Lemma~\ref{lem:symmetric-monomial-bijection} we have
\[
H_{c\omega_\ell}(y)
=
e^{-c\omega_\ell}
\operatorname{ch}V_{A_\ell}(c\omega_\ell)
=
\sum_{\mathbf b\in B_c}y^{\mathbf b}.
\]
This proves \eqref{eq:A-endpoint-c-H}. Now equation  \eqref{eq:A-endpoint-c-M} follows parallelly to the proof of equation  \eqref{eq:A-M}.

\end{proof}
These real factors can be used in all the general-graph, chordal,
and path formulas above.

\subsection{Explicit computations}

\begin{example}\label{ex:A3}
Take $\mathcal{G}_{\mathrm{re}}=A_3$, $u=2$, $c=1$, $\G_{\mathrm{im}}=L_2$, and $v=1$.
The two-element subsets of $\{1,2,3,4\}$ give
\[
\begin{array}{c|cccccc}
T&\{1,2\}&\{1,3\}&\{1,4\}&\{2,3\}&\{2,4\}&\{3,4\}\\ \hline
\by^{\mathbf b(T)}&1&y_2&y_2y_3&y_1y_2&y_1y_2y_3&y_1y_2^2y_3.
\end{array}
\]
Thus
\[
H=1+y_2+y_2y_3+y_1y_2+y_1y_2y_3+y_1y_2^2y_3,
\qquad S=1-x_1H-x_2,
\]
expression of $S$ follows from Proposition \ref{cor:S}.
Let $\eta=\al_1+2\al_2+\al_3$ and $\mvec=(2,1)$.
The only unordered pairs of monomials in $H$ whose product is
$y_1y_2^2y_3$ are
\[
(1,y_1y_2^2y_3),\quad
(y_2,y_1y_2y_3),\quad
(y_1y_2,y_2y_3).
\]
Here $t=m_1=2$, so need to compute $H^2[y_1y_2^2y_3]$.  Since each pair consists of distinct monomials and therefore contributes twice in $H^2$.
Hence $M_{\omega_2}(\eta;2)=6$, and from \eqref{eq:path-coefficient},
\[
S^q[e^{-(\eta+2\be_1+\be_2)}]
=-6\binom q2\binom{q-2}1=-3q(q-1)(q-2).
\]
Its linear coefficient is $-6$. Since the gcd of the exponent
coordinates is $1$, \eqref{eq:divisor-relation} gives
$\mult(\eta+2\be_1+\be_2)=6$.
\end{example}

\begin{example}
\label{ex:A2}
Take $\mathcal{G}_{\mathrm{re}}=A_2$, $u=2$, $c=1$, $\mathcal{G}_{\mathrm{im}}=L_2$, and $v=1$.
Then $H=1+y_2+y_1y_2$ and $S=1-x_1H-x_2$.
For $\mu=\al_1+2\al_2+2\be_1+2\be_2$,
\[
K_2((1,2);2)=\frac{2!}{1!1!0!}=2,
\qquad
S^q[e^{-\mu}]=2\binom q2\binom{q-2}2
=\frac12q(q-1)(q-2)(q-3),
\]
the above equations follows from \eqref{eq:K} and \eqref{eq:path-coefficient}.
The linear coefficient is $-3$, and the gcd is $1$, so
$\mult(\mu)=3$.
For $\mu'=2\al_1+2\al_2+2\be_1+2\be_2$, on the other hand,
$K_2((2,2);2)=1$ and
\[
C(\mu')=\frac32.
\]
At the divided exponent
$\mu'/2=\al_1+\al_2+\be_1+\be_2$ we have
$K_2((1,1);1)=1$ and $S^q[e^{-\mu'/2}]=q(q-1)$, so
$C(\mu'/2)=1$. The two divisors are $1,2$, with
$\Mob(1)=1$ and $\Mob(2)=-1$. Thus
\[
\mult(\mu')=C(\mu')-\frac12C(\mu'/2)
=\frac32-\frac12=1.
\]

\end{example}

\begin{example}[Type \texorpdfstring{$A_2$}{A2} with the chordal diamond]
\label{ex:A2-diamond}
Use the diamond graph from Example~\ref{ex:diamond}, attached at
$v=1$ to $u=2$ of $A_2$, with $c=1$. Set
$\eta=\al_1+2\al_2$ and $\mvec=(2,1,1,2)$.
Then $K_2((1,2);2)=M(\eta;2)=2$ by Proposition \ref{prop:A-endpoint} and
\[
\pi_{\G_{\mathrm{im}}}^{(2,1,1,2)}(q)
=\binom q2\binom{q-2}1\binom{q-3}1\binom{q-2}2
=\frac14q(q-1)(q-2)^2(q-3)^2,
\]
by equation \eqref{eq:chordal-pi}.
Since $|\mvec|=6$, equation  \eqref{eq:chordal-coefficient} gives
\[
S^q[e^{-(\eta+2\be_1+\be_2+\be_3+2\be_4)}]
=\frac12q(q-1)(q-2)^2(q-3)^2.
\]
Its linear coefficient is $-18$, by evaluating all factors except
the first $q$ at zero. Since gcd is $1$, so the multiplicity is $$C(\eta+2\be_1+\be_2+\be_3+2\be_4)=-S^q[e^{-(\eta+2\be_1+\be_2+\be_3+2\be_4)}][q]=18.$$
\end{example}

\section*{Declaration of generative AI and AI-assisted technologies in the manuscript preparation process}

\noindent
During the preparation of this work, the authors used ChatGPT to refine sentence structure, improve readability, and assist with some computations. The original ideas presented in this paper are those of the authors. Following the use of ChatGPT, the authors reviewed and edited the content as needed and take full responsibility for the content of the published article.


\begin{thebibliography}{100}

\bibitem{AKV}
G. Arunkumar, D. Kus, and R. Venkatesh,
\emph{Root multiplicities for Borcherds algebras and graph coloring},
J. Algebra \textbf{499} (2018), 538--569.

\bibitem{BHV}
D. Biswas, I. Habib, and R. Venkatesh,
\emph{On the characterization of chordal graphs using Horn
hypergeometric series},
Res. Number Theory \textbf{11} (2025), no.~1, Paper No.~20, 14 pp.

\bibitem{Borcherds}
R. E. Borcherds,
\emph{Generalized Kac--Moody algebras},
J. Algebra \textbf{115} (1988), 501--512.


\bibitem{Dirac}
G. A. Dirac,
\emph{On rigid circuit graphs},
Abh. Math. Sem. Univ. Hamburg \textbf{25} (1961), 71--76.

\bibitem{FH}
W. Fulton and J. Harris,
\emph{Representation Theory: A First Course},
Graduate Texts in Mathematics, vol.~129,
Springer-Verlag, New York, 1991.

\bibitem{Golumbic}
M. C. Golumbic,
\emph{Algorithmic Graph Theory and Perfect Graphs},
2nd ed., Annals of Discrete Mathematics, vol.~57,
Elsevier, Amsterdam, 2004.

\bibitem{Kac}
V. G. Kac,
\emph{Infinite-Dimensional Lie Algebras},
3rd ed., Cambridge University Press, Cambridge, 1990.

\bibitem{Kang}
S.-J. Kang,
\emph{Generalized Kac--Moody algebras and the modular function $j$},
Math. Ann. \textbf{298} (1994), 373--384.


\bibitem{Stanley}
R. P. Stanley,
\emph{A symmetric function generalization of the chromatic
polynomial of a graph},
Adv. Math. \textbf{111} (1995), 166--194.

\bibitem{VV}
R. Venkatesh and S. Viswanath,
\emph{Chromatic polynomials of graphs from Kac--Moody algebras},
J. Algebraic Combin. \textbf{41} (2015), no.~4, 1133--1142.

\end{thebibliography}
\end{document}